\newtheorem{thm}{Theorem}
\newtheorem{prop}{Proposition}
\newtheorem{claim}{Claim}
\newtheorem{lem}{Lemma}
\newtheorem{cor}{Corollary}
\newenvironment{wst}
{\setlength{\leftmargini}{1.5\parindent}
 \begin{itemize}
 \setlength{\itemsep}{-1.1mm}}
{\end{itemize}}
\begin{document}
	
\title{\bf The maximum sum of the sizes of all intersections within $m$-size families}
\author[a]{Sumin Huang\thanks{		Email: sumin2019@sina.com}}
\author[b]{Gyula O.H. Katona\thanks{Corresponding author.
Email: katona.gyula.oh@renyi.hu}}
\author[b,c]{Erfei Yue\thanks{Email: yef9262@mail.bnu.edu.cn}}
\affil[a]{School of Mathematics and Statistics, Nanjing University of Information Science and Technology, Nanjing 210044, P.R. China}
\affil[b]{Alfr\'{e}d R\'{e}nyi Institute of Mathematics, Hungarian Academy of Sciences, Budapest 1053, Hungary}
\affil[c]{Institute of Mathematics, E\"{o}tv\"{o}s Lor\'{a}nd University, Budapest 1117, Hungary}

\date{}
\maketitle

\begin{abstract}
	For a family of sets $\mathcal{F}$, let $\omega(\mathcal{F}):=\sum_{\{A,B\}\subset \mathcal{F}}|A\cap B|$. In this paper, we prove that provided $n$ is sufficiently large, for any $\mathcal{F}\subset \binom{[n]}{k}$ with $|\mathcal{F}|=m$, $\omega(\mathcal{F})$ is maximized by the family consisting of the first $m$ sets in the lexicographical ordering on $\binom{[n]}{k}$. Compared to the maximum number of adjacent pairs in families, determined by Das, Gan and Sudakov in 2016, $\omega(\mathcal{F})$ distinguishes the contributions of intersections of different sizes. Then our results is an extension of Ahlswede and Katona's results in 1978, which determine the maximum number of adjacent edges in graphs. Besides, since $\omega(\mathcal{F})=\frac{1}{2}\left(\sum_{x\in [n]}|\{F\in \mathcal{F}:x\in F\}|^2-km\right)$ for $k$-uniform family of size $m$, our results also give a sharp upper bound of the sum of squares of degrees in a hypergraph.
\end{abstract}

\vspace{2mm} \noindent{\bf Keywords}: intersection, squares of degree, family, lexicographic ordering
\vspace{2mm}

\setcounter{section}{0}
	
\section{Introduction}
    Let $[n]=\{1,\ldots,n\}$ and $\binom{X}{k}$ be the family of all $k$-subsets of a set $X$ for $|X|\geq k\geq 0$. We say a family of sets $\mathcal{F}$ is $k$-uniform if $\mathcal{F}\subset \binom{[n]}{k}$. A family $\mathcal{F}$ is \textit{intersecting} if there exists no disjoint pair $F_1,F_2\in\mathcal{F}$. As one of the most fundamental theorems in extremal set theory, Erd\H{o}s, Ko and Rado \cite{Erdos} determined the largest size of $k$-uniform intersecting families. Obviously, when the size of a family is sufficiently large, there must exist disjoint pairs. It is natural to ask how many disjoint pairs must appear in a family $\mathcal{F}$ of size $m$? 
    
    Actually, this problem is the {\it Erd\H{o}s-Rademacher problem} or {\it supersaturation} with respect to intersecting families, which asks how many copies of the forbidden configuration must appear in a structure larger than the extremal bound. The Erd\H{o}s-Rademacher problem with respect to intersecting families was first investigated by Frankl \cite{Frankl} and Ahlswede \cite{Ahlswede} independently. They considered the minimum number of disjoint pairs in a non-uniform family $\mathcal{F}$ of size $m$. Then, in \cite{Ahlswede}, Ahlswede asked the corresponding problem to $k$-uniform family.

    Before proposing this problem, Ahslwede and Katona \cite{Ahlswede2} had already solved the case $k=2$. They determined the maximum number of adjacent edges in a graph of size $m$. Let $C_{n}^m$ be the graph with vertex set $[n]$ and edge set $\{(i,j):1\leq i<j\leq a\}\cup \{(i,a+1):1\leq i\leq b\}$, where $a$ and $b$ are determined by the unique representation $m=\binom{a}{2}+\binom{b}{1}$ with $0\leq b<a$. Also, let $S^m_n$ be the complement of $C^{\binom{n}{2}-m}_n$.
    
    \begin{thm}[\cite{Ahlswede2}]\label{thm-ahl}
    	Let $n$ and $m$ be positive integers. Then either $C_{n}^m$ or $S_{n}^m$ maximizes the number of adjacent pairs among all $n$-vertex graphs of size $m$.
    \end{thm}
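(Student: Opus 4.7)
The plan is to rewrite the objective, reduce to a structured graph class by shifting, and then compare within that class. The number of adjacent edge pairs in $G$ equals $P(G) := \sum_{v} \binom{d(v)}{2}$, since each such pair is counted exactly once at its common vertex. Hence the problem is to maximize $P$ (equivalently, $\sum_v d(v)^2$) over $n$-vertex graphs with exactly $m$ edges.

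The first step is an edge-shifting argument. Suppose $d(u) \geq d(v)$ in $G$ and there is $w \notin \{u,v\}$ with $vw \in E(G)$ but $uw \notin E(G)$. Replacing edge $vw$ by edge $uw$ preserves the edge count, and a direct computation shows that $P$ changes by $d(u) - d(v) + 1 \geq 1$, so the move strictly increases $P$. Iterating until no such move is available, we may assume $G$ is a \emph{threshold graph}, meaning the closed neighborhoods are linearly ordered by inclusion. This reduces the problem to maximizing $P$ over $n$-vertex threshold graphs with $m$ edges.

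The heart of the proof is then to show that among these threshold graphs the maximum is attained by $C_n^m$ or $S_n^m$. Every threshold graph on $n$ vertices is encoded by a binary build sequence in $\{0,1\}^{n-1}$ recording whether each new vertex is added as isolated or dominating, and $C_n^m$ and $S_n^m$ correspond to the two extreme shapes (a block of dominating additions concentrated at one end, with a single transitional step accounting for $b$ extra edges). I would define an edge-count-preserving swap on build sequences that does not decrease $P$, and show that iterating it drives every threshold graph to one of these two canonical forms. A final explicit computation of $P(C_n^m)$ and $P(S_n^m)$ in terms of $n, m, a, b$ then identifies the larger of the two and finishes the theorem.

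The main obstacle is this second reduction: small cases already exhibit ``mixed'' threshold graphs that are neither $C_n^m$ nor $S_n^m$ (for instance, when $n=6$ and $m=5$ the threshold graph with degree sequence $(4,2,2,1,1,0)$ is genuinely distinct from both), so the argument cannot proceed at the level of the degree sequence alone. Designing a single edge-count-preserving move on build sequences that monotonically converges to one of the two canonical shapes is delicate, and I expect to need careful case analysis or an auxiliary potential function exploiting the discrete convexity of $\binom{\cdot}{2}$.
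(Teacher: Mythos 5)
The paper does not actually prove this statement---it is quoted from Ahlswede and Katona \cite{Ahlswede2}---so the only question is whether your argument stands on its own. Your first step is correct and standard: the number of adjacent pairs equals $\sum_v \binom{d(v)}{2}$, and the compression that replaces an edge $vw$ by $uw$ (when $d(u)\geq d(v)$, $w\notin\{u,v\}$, $vw\in E$, $uw\notin E$) increases this quantity by exactly $d(u)-d(v)+1\geq 1$, so since the quantity is bounded an optimal graph may be assumed to be a threshold graph (nested neighborhoods). Your worked example with degree sequence $(4,2,2,1,1,0)$ for $n=6$, $m=5$ is also a correct illustration that the threshold class is strictly larger than the two candidate families.

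However, the entire content of the theorem lies in the step you describe only as a plan: showing that among threshold graphs with $m$ edges the maximum of $\sum_v\binom{d(v)}{2}$ is attained at $C_n^m$ or $S_n^m$. You propose ``an edge-count-preserving swap on build sequences that does not decrease $P$'' which would drive every threshold graph to one of the two canonical forms, but you do not exhibit such a swap, and you explicitly flag that you expect delicate case analysis. This is precisely where Ahlswede and Katona do their real work: they compare the quasi-complete and quasi-star families against arbitrary nested graphs by a global argument, and the identity of the winner genuinely alternates between $C_n^m$ and $S_n^m$ as $m$ varies near $\frac{1}{2}\binom{n}{2}$. That alternation is strong evidence against the existence of a single monotone local-improvement scheme on build sequences: such a scheme would have to ``know'' in advance which of the two incomparable targets to converge to, and a move that never decreases $P$ cannot simultaneously terminate at both. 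As written, the proposal establishes only the reduction to threshold graphs, which is the routine half; the extremal comparison that constitutes the theorem is missing.
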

    
    In 2003, Bollob\'{a}s and Leader \cite{Bollobas} gave a new proof to the special case $m=\sum_{i\geq k}\binom{n}{i}$ for Frankl and Ahlswede's result. They also conjectured that for $k$-uniform families with small size, the initial segment of the lexicographical ordering on $\binom{[n]}{k}$ minimize the number of disjoint pairs. This conjecture was confirmed by Das, Gan and Sudakov \cite{Das} in 2016. The {\it lexicographical ordering} of sets is defined by $A$ being smaller than $B$ if and only if the minimum element in $A\backslash B$ is smaller than that in $B\backslash A$. Denote by $\mathcal{L}_{n,k}^m$ the family consisting of the first $m$ sets in the lexicographical ordering on $\binom{[n]}{k}$.
    
    \begin{thm}[\cite{Das}]\label{thm-das}
    	Provided $n>108k^2l(k+l)$ and $0\leq m\leq \binom{n}{k}-\binom{n-l}{k}$, $\mathcal{L}_{n,k}^m$ minimizes the number of disjoint pairs among all families in $\binom{[n]}{k}$ of size $m$.
    \end{thm}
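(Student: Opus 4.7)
The plan is to combine the shifting method with induction, reducing to \emph{lex-compressed} families. First, one verifies that the standard lex-compression --- for $i<j$, replacing each $F\in\mathcal{F}$ with $j\in F$, $i\notin F$ by $(F\setminus\{j\})\cup\{i\}$ whenever the resulting set is not already in $\mathcal{F}$ --- preserves $|\mathcal{F}|$ and does not increase the number of disjoint pairs (a short bijective argument shows that each newly created disjoint pair is compensated by a destroyed one). Since $\mathcal{L}_{n,k}^m$ is invariant under every lex-compression, I may assume $\mathcal{F}$ itself is lex-compressed. The case $k=1$ is trivial, and $k=2$ falls within the range handled by Theorem~\ref{thm-ahl}.

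For the inductive step, I would partition $\mathcal{F}=\mathcal{F}_1\sqcup\mathcal{F}_{\bar 1}$ by whether a set contains the element $1$, with $|\mathcal{F}_1|=m_1$ and $|\mathcal{F}_{\bar 1}|=m-m_1$. The number of disjoint pairs decomposes as
\[
\mathrm{dp}(\mathcal{F}) \;=\; \mathrm{dp}(\mathcal{F}_{\bar 1}) \;+\; c(\mathcal{F}_1,\mathcal{F}_{\bar 1}),
\]
where $c$ counts cross disjoint pairs $(F,G)$ with $F\in\mathcal{F}_1$, $G\in\mathcal{F}_{\bar 1}$, $F\cap G=\emptyset$. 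The first term is controlled inductively: $\mathcal{F}_{\bar 1}\subset\binom{\{2,\ldots,n\}}{k}$ lives on a ground set of size $n-1$, and the inductive hypothesis --- applied with $l+1$ in place of $l$, which is legal because $n>108k^2l(k+l)$ leaves sufficient slack --- yields $\mathrm{dp}(\mathcal{F}_{\bar 1})\ge\mathrm{dp}(\mathcal{L}^{m-m_1}_{n-1,k})$.

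The core step is lower-bounding $c(\mathcal{F}_1,\mathcal{F}_{\bar 1})$. Since $\mathcal{F}$ is lex-compressed, the link $\{F\setminus\{1\}:F\in\mathcal{F}_1\}\subset\binom{\{2,\ldots,n\}}{k-1}$ and the family $\mathcal{F}_{\bar 1}$ are themselves initial segments in the lex orders on their respective universes. A Kruskal--Katona-style shadow inequality, combined with the hypothesis $m\le\binom{n-l}{k}$ (which guarantees every $F\in\mathcal{L}_{n,k}^m$ meets $[l]$ and keeps the family close to a star), then lower-bounds $c(\mathcal{F}_1,\mathcal{F}_{\bar 1})$ by the corresponding cross count for $\mathcal{L}_{n,k}^m$ split in the same way. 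A final discrete optimization shows that the marginal cost of moving a set from $\mathcal{F}_1$ to $\mathcal{F}_{\bar 1}$ is non-negative, forcing the optimal choice $m_1=\min\!\bigl(m,\tbinom{n-1}{k-1}\bigr)$, which matches $\mathcal{L}_{n,k}^m$.

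The main obstacle is this cross-pair inequality: it is a bipartite count mixing two sub-universes, and lex-compression alone does not directly compare $c(\mathcal{F}_1,\mathcal{F}_{\bar 1})$ to its lex counterpart. Making the Kruskal--Katona-type estimate quantitative and uniform over the full range $m\le\binom{n-l}{k}$, and absorbing the loss in the pair $(n,l)$ incurred at each inductive step, is what necessitates the concrete (and likely non-tight) growth condition $n>108k^2l(k+l)$.
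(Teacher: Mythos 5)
This theorem is quoted from Das, Gan and Sudakov \cite{Das}; the present paper does not prove it, so your attempt can only be measured against the method of \cite{Das}, which Lemma~\ref{lem1} here adapts. That method is not compression-based: one shows by a local exchange argument (swap out a set $F$ with few intersections for a set through a popular element) that an extremal family must have all its ``degree'' concentrated on a cover of size $l$, and then inducts by peeling off full stars. Your outline rests instead on shifting, and its first concrete failure is here: the shift $S_{ij}$ indeed does not increase the number of disjoint pairs (your compensation argument is the standard and correct one), but invariance under all shifts only places $\mathcal{F}$ in the large class of left-compressed families. It does not make $\mathcal{F}$ a lex initial segment, and --- where your ``core step'' breaks --- it does not make the link $\{F\setminus\{1\}:F\in\mathcal{F}_1\}$ or the family $\mathcal{F}_{\bar 1}$ lex initial segments of their universes. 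For instance $\binom{[4]}{3}\subset\binom{[6]}{3}$ is left-compressed, yet the link of $1$ is $\{23,24,34\}$, not the lex segment $\{23,24,25\}$. So the hypothesis you feed into the Kruskal--Katona step is false.

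More importantly, the two steps you defer are the entire theorem. (i) The inequality $c(\mathcal{F}_1,\mathcal{F}_{\bar 1})\ge c(\mathcal{L}_1,\mathcal{L}_{\bar 1})$ for a fixed split is not a shadow inequality, and the trade-off points the wrong way at first sight: taking $\mathcal{F}_1$ to be the full star minimizes $|\mathcal{F}_{\bar 1}|$ but makes every $G\in\mathcal{F}_{\bar 1}$ disjoint from exactly $\binom{n-1-k}{k-1}$ members of $\mathcal{F}_1$, the maximum possible per set, so one must genuinely balance the two terms rather than minimize each separately. (ii) The ``final discrete optimization'' forcing $m_1=\min\bigl(m,\binom{n-1}{k-1}\bigr)$ is precisely the assertion that an extremal family contains a full star, which is the main content of \cite{Das} (and the role of Lemma~\ref{lem1} in this paper, whose proof occupies several pages of claims); it cannot be dispatched as a marginal-cost computation, because moving one set from $\mathcal{F}_{\bar 1}$ into $\mathcal{F}_1$ changes both terms of your decomposition in ways that depend on the global structure of $\mathcal{F}$. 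As written, the proposal is a correct frame (the decomposition $\mathrm{dp}(\mathcal{F})=\mathrm{dp}(\mathcal{F}_{\bar 1})+c(\mathcal{F}_1,\mathcal{F}_{\bar 1})$ and the induction on the ground set are exactly right, modulo the fact that the parameter should pass to $l-1$, not $l+1$, when $n$ drops to $n-1$) built around a hole where the theorem's actual difficulty lives.
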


    Note that Theorem~\ref{thm-das} shows that $\mathcal{L}_{n,k}^m$ maximizes the number of adjacent pairs in a $k$-uniform family of size $m$. In this statement, an adjacent pair $(F_1,F_2)$ with $|F_1\cap F_2|=1$ and another adjacent pair $(F'_1,F'_2)$ with $|F'_1\cap F'_2|=k-1$ have the same contribution to enumerate the number of adjacent pairs. Distinguishing their contributions is a natural extension of Theorem~\ref{thm-das}. For a family $\mathcal{F}$, denote the sum of sizes of all intersections by
    $$
    \omega(\mathcal{F}):=\sum_{\{A,B\}\subset \mathcal{F}}|A\cap B|.
    $$ 
    Then $\omega(\mathcal{F})$ is a function which distinguishes the contribution of intersections with different sizes.
    
    $\omega(\mathcal{F})$ was first proposed by Kong and Ge \cite{Kong} in an unpublished paper.
    
    \begin{thm}[\cite{Kong}]
    	Let $C_0\geq 3\times 10^3$ be an absolute constant and $k\geq 2, r\geq 0$ be two fixed integers. For any $n\geq C_0(r+1)^3(k+r)k^2$ and $\delta\in [\frac{150k^3(r+1)^2}{n},1-\frac{150k^3(r+1)^2}{n}]\cup \{1\}$, if $\mathcal{F}\subset \binom{[n]}{k}$ with $|\mathcal{F}|=\sum_{i=1}^{r}\binom{n-i}{k-1}+\delta \binom{n-(r+1)}{k-1}$ is an extremal family maximizing $\omega(\mathcal{F})$, then
    	$$\mathcal{L}_{n,k}^{m_1}\subseteq \mathcal{F}\subseteq \mathcal{L}_{n,k}^{m_2},$$
    	where $m_1:=\sum_{i=1}^{r}\binom{n-i}{k-1}$ and $m_2:=\sum_{i=1}^{r+1}\binom{n-i}{k-1}$.
    \end{thm}
    They only provided a property of extremal families, so in this paper, we attempt to uniquely determine the extremal family.
    
    Note that for $2$-uniform families $\mathcal{F}$, $\mathcal{F}$ minimizes the number of disjoint pairs if and only if $\mathcal{F}$ maximizes $\omega(\mathcal{F})$. Thus, Theorem~\ref{thm-ahl} gives the extremal family of $\omega(\mathcal{F})$ provided $\mathcal{F}\subset \binom{[n]}{2}$. However, this statement is not true for $k\geq 3$. While Theorem~\ref{thm-das} focus on the minimum number of disjoint pairs, we will determine the maximum value of $\omega(\mathcal{F})$ in this paper.

    For $\mathcal{F}\subseteq \binom{[n]}{k}$ and $x\in [n]$, let $\mathcal{F}(x):=\{F\in \mathcal{F}:x\in F\}$. Also, for $\mathcal{F}\subset \binom{[n]}{k}$ of size $m$, it can be confirmed that $\omega(\mathcal{F})=\sum_{x\in [n]} \binom{|\mathcal{F}(x)|}{2}=\frac{1}{2}\sum_{x\in [n]} |\mathcal{F}(x)|^2-\frac{1}{2}km$. Thus, in order to maximize $\omega(\mathcal{F})$, it is sufficient to maximize $\sum_{x\in [n]} |\mathcal{F}(x)|^2$, that is the sum of squares of degrees in a hypergraph. For a graph $G$ of size $m$, de Caen \cite{Caen} gave an upper bound $\frac{2m^2}{n-1}+(n-1)m$ of the sum of squares of degrees and then this bound was improved by Das \cite{chDas}. In 2003, Bey \cite{Bey} generalized de Caen's upper bound to hypergraph. We state Bey's  results using the notation of families of sets and the fact $\omega(\mathcal{F})=\frac{1}{2}\sum_{x\in [n]} |\mathcal{F}(x)|^2-\frac{1}{2}km$.
    
    \begin{thm}[\cite{Bey}]\label{thm-bey}
    	Let $n,k,m$ be positive integers. If $0<m\leq \binom{n}{k}$, then for any $\mathcal{F}\subset \binom{[n]}{k}$ with $|\mathcal{F}|=m$,
    	$$
    	\omega(\mathcal{F})\leq \frac{k(k-1)}{2(n-1)}m^2+\frac{1}{2}\binom{n-2}{k-1}m-\frac{1}{2}km,
    	$$
        where the equality holds if and only if $\mathcal{F}$ is one of $\binom{[n]}{k}$ , $\{F\in \binom{[n]}{k}:1\in F\}$, $\{F\in \binom{[k+1]}{k}:[r]\subset F\}$ for each $r=2,\dots, \left\lfloor\frac{k+1}{2}\right\rfloor$, or one of the complement of these families.
    \end{thm}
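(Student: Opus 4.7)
My plan is to recast Bey's inequality as an upper bound on the sum of squared degrees, and then attack it via a combination of double-counting and a convexity argument that exploits the rigidity of $k$-uniform families. Setting $d_x:=|\mathcal{F}(x)|$, the identity $\omega(\mathcal{F})=\frac{1}{2}\sum_x d_x^2-\frac{1}{2}km$ noted in the paper reduces the claim to
\[
\sum_{x\in[n]} d_x^2 \;\leq\; \frac{k(k-1)}{n-1}\,m^2 + \binom{n-2}{k-1}m,
\]
subject to $\sum_x d_x = km$, $0\leq d_x \leq \binom{n-1}{k-1}$, and realizability of $(d_x)$ as the degree sequence of some $\mathcal{F}\subset\binom{[n]}{k}$ of size $m$. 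The realizability constraint is where the combinatorial content lies.

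The main step would introduce the pairwise codegrees $d_{xy}:=|\{F\in\mathcal{F}:\{x,y\}\subset F\}|$, which satisfy the global identity $\sum_{\{x,y\}}d_{xy}=\binom{k}{2}m$ and the local identity $\sum_{y\neq x}d_{xy}=(k-1)d_x$. Using $\sum_x d_x^2 = \sum_{F\in\mathcal{F}}\sum_{x\in F}d_x$ and Cauchy-Schwarz,
\[
\Bigl(\sum_x d_x^2\Bigr)^{\!2} \;\leq\; m \sum_{F\in\mathcal{F}} \Bigl(\sum_{x\in F} d_x\Bigr)^{\!2} \;=\; m\Bigl(\sum_x d_x^3 + 2\sum_{\{x,y\}}d_x d_y d_{xy}\Bigr).
\]
I would then bound the right-hand side using $d_{xy}\leq \binom{n-2}{k-2}$ together with $d_x\leq \binom{n-1}{k-1}$, turning this into a scalar polynomial inequality in $S:=\sum_x d_x^2$. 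The coefficient $\frac{k(k-1)}{n-1}$ in Bey's bound should emerge from an LP-style optimization on the polytope of admissible $(d_x, d_{xy})$ after dualizing the constraints $\sum d_x = km$ and $\sum d_{xy}=\binom{k}{2}m$.

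The main obstacle I anticipate is extracting the precise constants: a direct application of the Cauchy-Schwarz above tends to produce a quadratic coefficient that is slightly too large, so one likely must refine the inequality by exploiting the complementary identity $\sum_x d_x(N-d_x)=Nkm-S$ (with $N:=\binom{n-1}{k-1}$), which encodes the duality between $\mathcal{F}$ and $\binom{[n]}{k}\setminus\mathcal{F}$, or by combining several estimates via an LP argument. For the equality analysis, tracing back through each inequality yields the rigidity conditions: Cauchy-Schwarz forces $\sum_{x\in F}d_x$ to be constant across $F\in\mathcal{F}$, and the codegree bound forces every active pair $\{x,y\}$ to appear in the maximum possible number of sets. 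These conditions should isolate the extremal families $\binom{[n]}{k}$, the star $\{F:1\in F\}$, $\{F\in\binom{[k+1]}{k}:[r]\subset F\}$, and their complements; verifying that no other families satisfy all rigidity constraints simultaneously will require a case analysis, which I expect to be the most delicate portion of the proof because the extremal examples occur at several very different scales of $m$.
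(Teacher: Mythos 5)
First, a point of reference: the paper does not prove this statement at all --- it is quoted verbatim from Bey's 2003 paper \cite{Bey} and used only as background, so there is no in-paper proof to compare against. Judged on its own terms, your proposal is a plan rather than a proof, and the plan has a genuine gap at its center. The identities you set up are all correct ($\sum_x d_x^2=\sum_{F\in\mathcal F}\sum_{x\in F}d_x$, the codegree relations, and the Cauchy--Schwarz expansion), but the entire content of the theorem is the exact coefficient $\frac{k(k-1)}{n-1}$ together with the equality characterization, and you explicitly defer that step to an unspecified ``LP-style optimization'' and a ``refinement'' of the inequality. Acknowledging that your main estimate ``tends to produce a quadratic coefficient that is slightly too large'' is not a proof that the slack can be removed.

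Moreover, the specific chain you propose provably cannot be repaired by constant-chasing. Take $\mathcal F=\{F\in\binom{[n]}{k}:1\notin F\}$, the complement of the star, which is one of the equality cases of Bey's bound. There every degree equals $\binom{n-2}{k-1}<\binom{n-1}{k-1}$ and every active codegree equals $\binom{n-3}{k-2}<\binom{n-2}{k-2}$, so both pointwise bounds $d_x\le\binom{n-1}{k-1}$ and $d_{xy}\le\binom{n-2}{k-2}$ are strict on this family. Consequently the inequality $S^2\le m\bigl(\sum_x d_x^3+2\sum_{\{x,y\}}d_xd_yd_{xy}\bigr)\le mU(S,m)$ is strict there, which means $S^{*}$ lies strictly inside the feasible region of your quadratic in $S$; the bound you extract at $m=\binom{n-1}{k}$ is therefore strictly larger than Bey's bound, which $\mathcal F$ attains. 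So no amount of tightening within this chain recovers the theorem at that value of $m$: you need a genuinely different inequality whose equality conditions are simultaneously met by the full family, the star, the small families $\{F\in\binom{[k+1]}{k}:[r]\subset F\}$ (which are extremal only when $n=k+1$), and all their complements. The complementation identity you mention is a sensible ingredient for restoring the $\mathcal F\leftrightarrow\binom{[n]}{k}\setminus\mathcal F$ symmetry, but as written the argument does not close, and the equality analysis is likewise only gestured at.
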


    Note that Theorem~\ref{thm-bey} gives a sharp upper bound if and only if one of the following statements holds:
    \begin{wst}
    	\item[\rm (i)] $m\in \{\binom{n}{k},\binom{n-1}{k-1},\binom{n-1}{k}\}$;
    	\item[\rm (ii)] $n=k+1$ and $m\in \{2,3,\ldots,k-2,k-1\}$. 
    \end{wst}
    
    In this paper, we determine an extremal family of $\omega(\mathcal{F})$ for some special $m$. 
    To state our result, we should introduce the {\it $k$-cascade} form of $m$:
    $$
    m=\binom{a_1}{k}+\binom{a_{2}}{k-1}+\cdots+\binom{a_s}{k-s+1},\quad a_1>a_{2}>\cdots>a_s\geq 1.
    $$
    Note that every positive integer has a unique $k$-cascade representation. Now for given $n$ and $k$, we write $\binom{n}{k}-m$ in its $k$-cascade form and replace $a_i$ by $n-r_{i}$:
    \begin{align}
    	\binom{n}{k}-m=\binom{n-r_1}{k}+\binom{n-r_2}{k-1}+\cdots+\binom{n-r_{s}}{k-s+1},1\leq r_1<\cdots<r_{s}\leq n-1.\label{eqc}
    \end{align}
    We call $(r_1,\ldots,r_s)$ the {\it $(n,k)$-cascade coefficients} of $m$. 
    For $2\leq t\leq s$, let 
    \begin{align}
    	\delta_{n,k,m,t}:=\frac{\sum_{i=t}^s\binom{n-r_i}{k-i+1}}{\binom{n-t+1}{k-t+1}}. \label{eqd}
    \end{align}
     
    In Section 4, we will give a combinatorial explanation of $(n,k)$-cascade coefficients and $\delta_{n,k,m,t}$. Based on these definitions, we determine the extremal family maximizing $\omega(\mathcal{F})$ in what follows, and we will prove Theorem~\ref{thm-main} in Section 3.
    \begin{thm}\label{thm-main}
        Let $n,k,r$ be positive integers with $n>36k(2k+1)(k+r)r^2$. Assume that the $(n,k)$-cascade coefficients of $m$ are $(r_1,\ldots,r_s)$. For any $\mathcal{F}\subset \binom{[n]}{k}$ with $|\mathcal{F}|=m$, if $r_s\leq r$ and for each $2\leq t\leq s$, $$\frac{12k(2k+1)(k+1)}{n-r}<\delta_{n,k,m,t}<1-\frac{12k(2k+1)(k+1)}{n-r},$$ then $\omega(\mathcal{F})\leq \omega(\mathcal{L}_{n,k}^{m})$, with equality if and only if $\mathcal{F}= \mathcal{L}_{n,k}^{m}$ up to isomorphic.
    \end{thm} 

    The following theorem holds immediately by Theorem~\ref{thm-main} and Lemma~\ref{lem2}, which will be introduced at the end of Section 2. The proof of Theorem~\ref{thm-cor} is omitted in this paper.
    \begin{thm}\label{thm-cor}
        Let $n,k,r$ be positive integers with $n>36k(2k+1)(k+r)r^2$. Assume that the $(n,k)$-cascade coefficients of $m$ are $(r_1,\ldots,r_s)$. For any $\mathcal{F}\subset \binom{[n]}{k}$ with $|\mathcal{F}|=\binom{n}{k}-m$, if $r_s\leq r$ and for each $2\leq t\leq s$,
        $$\frac{12k(2k+1)(k+1)}{n-r}<\delta_{n,k,m,t}<1-\frac{12k(2k+1)(k+1)}{n-r},$$ then $\omega(\mathcal{F})\leq \omega(\binom{[n]}{k}\backslash\mathcal{L}_{n,k}^{m})$, with equality if and only if $\mathcal{F}= \binom{[n]}{k}\backslash\mathcal{L}_{n,k}^{m}$ up to isomorphic.
    \end{thm}
        
    Theorem~\ref{thm-main} and \ref{thm-cor} give a natural extension of Theorem~\ref{thm-ahl}. Note that if the $(n,k)$-cascade coefficients of $m$ have only one element $(r_1)$, then $m=\binom{n}{k}-\binom{n-r_1}{k}$. In this case, we don't need to consider the condition about $\delta_{n,k,m,t}$ in Theorem~\ref{thm-main} and \ref{thm-cor} since $s=1$. Hence, we have the following corollary.
    
    \begin{cor}
    	Let $n,k,r$ be positive integers with $n>36k(2k+1)(k+r)r^2$. For any $\mathcal{F}\subset \binom{[n]}{k}$ of size $m$,
    	\begin{itemize}
    		\item[\rm (i)] if $m\in\{\binom{n}{k}-\binom{n-r_1}{k}:1\leq r_1\leq r\}$, then $\omega(\mathcal{F})\leq \omega(\mathcal{L}_{n,k}^{m})$ with equality if and only if $\mathcal{F}= \mathcal{L}_{n,k}^{m}$ up to isomorphic.
    		\item[\rm (ii)] if $m\in\{\binom{n-r_1}{k}:1\leq r_1\leq r\}$, then $\omega(\mathcal{F})\leq \omega(\binom{[n]}{k}\backslash\mathcal{L}_{n,k}^{m})$ with equality if and only if $\mathcal{F}= \binom{[n]}{k}\backslash\mathcal{L}_{n,k}^{m}$ up to isomorphic.
    	\end{itemize}
    \end{cor}
    
\section{Preliminaries}
    In this section, we introduce some necessary notations and lemmas used in our proof.
    
    For the families $\mathcal{F}$ and $\mathcal{G}$, let $\omega(\mathcal{F},\mathcal{G}):=\sum_{(A,B) \in (\mathcal{F},\mathcal{G})}|A\cap B|$. For a set $F$, we use $\omega(F,\mathcal{F})$ to represent $\omega(\{F\},\mathcal{F})$.
    Given $\mathcal{F}\subseteq \binom{[n]}{k}$ and $x\in [n]$, recall that $\mathcal{F}(x)=\{F\in \mathcal{F}:x\in F\}\subseteq \mathcal{F}$. We call $\mathcal{F}(x)$ a {\it full star} if $\mathcal{F}(x)=\{F\in \binom{[n]}{k}:x\in F\}$. For a subset $X\subseteq [n]$, we say $X$ is a {\it cover} of $\mathcal{F}$ if $F\cap X\neq \emptyset$ for any $F\in \mathcal{F}$. Also, we use the notation $\mathcal{F}_{x}:=\{F\in \mathcal{F}:x \text{ is the minimum element in } F\}$.  The following proposition can be directly derived from the definition.
    \begin{prop}\label{prop1}
    	Let $\mathcal{F}\subseteq \binom{[n]}{k}$ of size $m$. Then we have
    	\begin{wst}
    		\item[\rm (i)] $\sum_{x\in [n]}|\mathcal{F}(x)|=km$.
    		\item[\rm (ii)] $\omega(\mathcal{F})=\sum_{x\in [n]}\binom{|\mathcal{F}(x)|}{2}=\frac{1}{2}\left(\sum_{x\in [n]}|\mathcal{F}(x)|^2-km\right)$.
    		\item[\rm (iii)] If $X$ is a cover of $\mathcal{F}$, then $\cup_{x\in X} \mathcal{F}(x)=\mathcal{F}$. Moreover, if $[r]$ is a cover of $\mathcal{F}$, then $\cup_{x\in [r]} \mathcal{F}_{x}=\mathcal{F}$ and $\sum_{x\in [r]}|\mathcal{F}_x|=|\mathcal{F}|=m$.
    		\item[\rm (iv)] If $F$ is an arbitrary set in $\binom{[n]}{k}$, then $\omega(F,\mathcal{F})=\sum_{x\in F}|\mathcal{F}(x)|$.
    		\item[\rm (v)] $\omega(\mathcal{F},\mathcal{F})=2\omega(\mathcal{F})+\sum_{F\in\mathcal{F}}|F\cap F|=2\omega(\mathcal{F})+km$.
    	\end{wst}
    \end{prop}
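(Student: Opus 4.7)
All five parts are elementary double counting, so my plan is to choose for each item the appropriate incidence structure---typically pairs $(x,F)$ with $x\in F$, or ordered/unordered pairs of sets drawn from $\mathcal{F}$---and evaluate it in two ways. I would prove the identities in the order (i), (ii), (iv), (v), (iii), since the first four are pure counting while (iii) needs one small extra observation.

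For (i), I would count pairs $(x,F)\in[n]\times\mathcal{F}$ with $x\in F$: summing over $x$ first gives $\sum_{x\in[n]}|\mathcal{F}(x)|$, while summing over $F$ first gives $\sum_{F\in\mathcal{F}}|F|=km$ by $k$-uniformity. For (ii), switching the order of summation in $\omega(\mathcal{F})=\sum_{\{A,B\}\subset\mathcal{F}}|A\cap B|$ gives $\sum_{x\in[n]}\binom{|\mathcal{F}(x)|}{2}$, because each $x\in[n]$ is counted exactly once for every unordered pair $\{A,B\}\subset\mathcal{F}(x)$; expanding $\binom{t}{2}=\tfrac{1}{2}(t^{2}-t)$ and invoking (i) produces the second equality. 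For (iv), I would split $\omega(F,\mathcal{F})=\sum_{G\in\mathcal{F}}|F\cap G|=\sum_{G\in\mathcal{F}}\sum_{x\in F}\mathbf{1}[x\in G]=\sum_{x\in F}|\mathcal{F}(x)|$ by interchanging the two inner sums. For (v), the sum $\omega(\mathcal{F},\mathcal{F})$ runs over ordered pairs $(A,B)\in\mathcal{F}\times\mathcal{F}$; separating the diagonal contribution ($A=B$, giving $\sum_{A\in\mathcal{F}}|A|=km$) from the off-diagonal part (which splits into symmetric couples $(A,B),(B,A)$ each contributing $|A\cap B|$, hence $2\omega(\mathcal{F})$ in total) yields the identity.

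For (iii), if $X$ is a cover then any $F\in\mathcal{F}$ contains some $x\in X$, so $F\in\mathcal{F}(x)\subseteq\bigcup_{x\in X}\mathcal{F}(x)$; the reverse containment is immediate from $\mathcal{F}(x)\subseteq\mathcal{F}$. The one mildly subtle point, when $X=[r]$, is that $\min F$ itself must lie in $[r]$: otherwise $F\cap[r]=\emptyset$, contradicting the covering property. Hence each $F\in\mathcal{F}$ belongs to the unique family $\mathcal{F}_{x}$ with $x=\min F\in[r]$, so $\{\mathcal{F}_x\}_{x\in[r]}$ partitions $\mathcal{F}$ and $\sum_{x\in[r]}|\mathcal{F}_x|=m$. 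There is no genuine obstacle in this proposition; the only step worth attention is the partition argument just sketched, which is the reason (iii) is stated separately for the cover $[r]$ rather than for a general cover $X$.
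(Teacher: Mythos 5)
Your proof is correct and is exactly the routine double-counting derivation the paper has in mind; the paper itself omits the proof, stating only that the proposition ``can be directly derived from the definition.'' All five verifications, including the observation that $\min F\in[r]$ whenever $F\cap[r]\neq\emptyset$ so that $\{\mathcal{F}_x\}_{x\in[r]}$ partitions $\mathcal{F}$, are sound.
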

    
    Then we introduce a lemma for simplifying calculations.
    
    \begin{lem}\label{lem-simp}
    	Let $\mathcal{F}\subseteq\binom{[n]}{k}$ be a family and $F\in \mathcal{F}$ be a $k$-set. For an arbitrary $k$-set $F'\in \binom{[n]}{k}\backslash\mathcal{F}$, we have
    	$$\omega((\mathcal{F}\backslash \{ F\})\cup \{F'\})-\omega(\mathcal{F})=\omega(F',\mathcal{F}\backslash\{ F\})-\omega(F,\mathcal{F}\backslash\{F\}).$$
    \end{lem}
    \begin{proof}
    	Since $(\mathcal{F}\backslash \{ F\})\cup \{F'\}$ and $\mathcal{F}$ have the same size. By Proposition~\ref{prop1}(v), it is clear that
    	\begin{align}
    		\omega((\mathcal{F}\backslash \{ F\})\cup \{F'\},(\mathcal{F}\backslash \{ F\})\cup \{F'\})-\omega(\mathcal{F},\mathcal{F})=2\omega((\mathcal{F}\backslash \{ F\})\cup \{F'\})-2\omega(\mathcal{F}). \label{eq-0}
    	\end{align}
        Also, by a direct calculation, we have
        \begin{align}
        	&\omega((\mathcal{F}\backslash \{ F\})\cup \{F'\},(\mathcal{F}\backslash \{ F\})\cup \{F'\})\notag\\
        	=& \omega(\mathcal{F}\backslash \{ F\},\mathcal{F}\backslash \{ F\})+2\omega(F',\mathcal{F}\backslash \{ F\})+\omega(F',F')\notag\\
        	=& \omega(\mathcal{F}\backslash \{ F\},\mathcal{F}\backslash \{ F\})+2\omega(F',\mathcal{F}\backslash \{ F\})+|F'\cap F'|\notag\\
        	=& \omega(\mathcal{F}\backslash \{ F\},\mathcal{F}\backslash \{ F\})+2\omega(F',\mathcal{F}\backslash \{ F\})+k.\label{eq-5}
        \end{align}
        Similarly, we can rewrite $\mathcal{F}$ as $(\mathcal{F}\backslash\{F\})\cup \{ F\}$ and then
        \begin{align}
        	\omega(\mathcal{F},\mathcal{F})=\omega(\mathcal{F}\backslash \{ F\},\mathcal{F}\backslash \{ F\})+2\omega(F,\mathcal{F}\backslash \{ F\})+k.\label{eq-6}
        \end{align}
        By \eqref{eq-5} and \eqref{eq-6}, we have 
        $$\omega((\mathcal{F}\backslash \{ F\})\cup \{F'\},(\mathcal{F}\backslash \{ F\})\cup \{F'\})-\omega(\mathcal{F},\mathcal{F})=2\omega(F',\mathcal{F}\backslash \{ F\})-2\omega(F,\mathcal{F}\backslash \{ F\}).$$
        Together with \eqref{eq-0}, we obtain that
        $$\omega((\mathcal{F}\backslash \{ F\})\cup \{F'\})-\omega(\mathcal{F})=\omega(F',\mathcal{F}\backslash\{ F\})-\omega(F,\mathcal{F}\backslash\{F\}),$$
        as desired.
    \end{proof}

    Next, we introduce our main lemma, which shows that in a certain case, 
    $\mathcal{F}$ has a cover of small size.

    \begin{lem}\label{lem1}
    	Let $\mathcal{F}$ be an extremal family in $\binom{[n]}{k}$ of size $m$. Let $r$ be the positive integer such that $\binom{n}{k}-\binom{n-r+1}{k}<m\leq \binom{n}{k}-\binom{n-r}{k}$. Let $\delta:=m/\binom{n-1}{k-1}$ if $r=1$; and $\delta:                                                                                                                                                                                                                                                                                                                                                                                                              =r/3$ if $r\geq 2$.
    	If $\mathcal{F}$ contains no full star, then $\mathcal{F}$ has a cover of $r$ elements provided $\delta n>12k(2k+1)(k+r)r^2$.
    \end{lem}
    \begin{proof}
    	Without loss of generality, we assume that $|\mathcal{F}(1)|\geq |\mathcal{F}(2)|\geq \cdots\geq |\mathcal{F}(n)|$. 
    	Let $$X_0:=\{x\in [n]:|\mathcal{F}(x)|\geq \frac{m}{3kr}\}.$$ 
    	Our main idea is similar to Das, Gan and Sudakov's \cite{Das}, which is to first prove that there are sufficiently many elements in $\mathcal{F}(1)$, and then to show $X_0$ is a cover of $\mathcal{F}$, and finally to confirm that $|X_0|\geq r$.
    	\begin{claim}\label{claim1}
    		$\omega(\mathcal{F})\geq \frac{m^2}{2r}-\frac{1}{2}km$.
    	\end{claim}
        \begin{proof}[Proof of Claim~\ref{claim1}]
        	Since $\mathcal{F}$ is an extremal family, we have $\omega(\mathcal{F})\geq \omega(\mathcal{L}_{n,k}^{m})$. Let $\mathcal{L}:=\mathcal{L}_{n,k}^{m}$. By Proposition~\ref{prop1}(ii), 
        	\begin{align}
        		\omega(\mathcal{L})=&
        		\frac{1}{2}\left(\sum_{x\in [n]}|\mathcal{L}(x)|^2-km\right)
        		\geq \frac{1}{2}\left(\sum_{x\in [r]}|\mathcal{L}(x)|^2-km\right)\notag\\
        		\geq& \frac{1}{2}\left(\sum_{x\in [r]}|\mathcal{L}_x|^2-km\right)
        		\geq \frac{r}{2}\left(\frac{\sum_{x\in [r]}|\mathcal{L}_x|}{r}\right)^2-\frac{1}{2}km.\label{eq-1}
            \end{align}
            Recall that $\binom{n}{k}-\binom{n-r+1}{k}<m\leq \binom{n}{k}-\binom{n-r}{k}$. Then $\mathcal{L}_{n,k}^{m}$ consists of all sets covered by $[r-1]$ and the first $m-\binom{n}{k}+\binom{n-r+1}{k}$ sets with minimum element $r$ in the lexicographic ordering on $\binom{[n]\backslash[r-1]}{k}$. This implies that $[r]$ is a cover of $\mathcal{L}_{n,k}^{m}$. By Proposition~\ref{prop1}(iii), we have $\sum_{x\in [r]}|\mathcal{L}_x|=|\mathcal{L}|=|\mathcal{L}_{n,k}^m|=m$. Then by \eqref{eq-1}, we have
            $$
            \omega(\mathcal{L})\geq \frac{r}{2}\left(\frac{m}{r}\right)^2-\frac{1}{2}km=\frac{m^2}{2r}-\frac{1}{2}km.
            $$
            Thus, $\omega(\mathcal{F})\geq \frac{m^2}{2r}-\frac{1}{2}km$, as desired.
        \end{proof}
    
        \begin{claim}\label{claimm}
        	$m>\frac{\delta n}{k}\binom{n-2}{k-2}$.
        \end{claim}
        \begin{proof}[Proof of Claim~\ref{claimm}]
        First we consider the case $r=1$. It is clear that 
        $$m=\delta\binom{n-1}{k-1}=\frac{\delta(n-1)}{k-1}\binom{n-2}{k-2}> \frac{\delta n}{k}\binom{n-2}{k-2}.$$	
        Then we consider the case $r\geq 2$.	
        Since $\mathcal{L}_{n,k}^m$ contains all sets covered by $[r-1]$, by the inclusion-exclusion principle and the Bonferroni inequalities, we have
        \begin{align}
        	m =|\mathcal{L}^m_{n,k}| \geq & \left|\left\{F\in\binom{[n]}{k}:F\cap [r-1]\neq \emptyset\right\}\right|\notag\\
        	= & \sum_{i=1}^{r-1}(-1)^{i+1}\binom{r-1}{i}\binom{n-i}{k-i}\notag\\
        	\geq & (r-1)\binom{n-1}{k-1}-\binom{r-1}{2}\binom{n-2}{k-2}\notag\\
        	= & \left(\frac{(r-1)(n-1)}{k-1}-\binom{r-1}{2}\right)\binom{n-2}{k-2}.\label{eq-2}
        \end{align}
        It is clear that $\frac{(r-1)(n-1)}{k-1}> \frac{(r-1)(n-1)}{k}$ and $\binom{r-1}{2}<\binom{r}{2}$. Then by \eqref{eq-2}, we have
        $$ m>\left(\frac{(r-1)(n-1)}{k}-\frac{r(r-1)}{2}\right)\binom{n-2}{k-2}=\frac{(r-1)(2n-2-kr)}{2k}\binom{n-2}{k-2}.$$
        To prove Claim~\ref{claimm}, it is sufficient to show that $\frac{(r-1)(2n-2-kr)}{2k}>\frac{\delta n}{k}$, that is $(r-1-\delta)n>\frac{1}{2}(r-1)(kr+2)$. Note that $\delta n>12k(2k+1)(k+r)r^2$ and $\delta=\frac{r}{3}$ when $r\geq 2$, Claim~\ref{claimm} holds immediately.
        \end{proof}
    
        \begin{claim}\label{claim2}
        	$|X_0|<6kr$.
        \end{claim}
        \begin{proof}[Proof of Claim~\ref{claim2}]
        	Suppose $|X_0|\geq 6kr$ and $X$ be a subset of $X_0$ with $|X|=6kr$. Then $\binom{|X|}{2}<18k^2r^2$. By the definition of $X_0$, for each $x\in X\subseteq X_0$, $|\mathcal{F}(x)|\geq \frac{m}{3kr}$. Then we have 
        	\begin{align*}
        		m=|\mathcal{F}|\geq &|\cup_{x\in X} \mathcal{F}(x)|\\
        		\geq &\sum_{x\in X}|\mathcal{F}(x)|-\sum_{\{x,y\}\subset X}|\mathcal{F}(x)\cap \mathcal{F}(y)|\\
        		\geq & |X|\frac{m}{3kr}-\binom{|X|}{2}\binom{n-2}{k-2}\\
        		> & 2m-18k^2r^2\frac{k}{\delta n}m\\
        		= & \left(2-\frac{18k^3r^2}{\delta n}\right)m,
        	\end{align*}
            where the last inequality follows from Claim~\ref{claimm}. Since $\delta n>12k(2k+1)(k+r)r^2>18k^3r^2$, $\left(2-\frac{18k^3r^2}{\delta n}\right)m>m$, which leads to a contradiction. Thus, $|X_0|<6kr$.  
        \end{proof}
        \begin{claim}\label{claim3}
        	$|\mathcal{F}_1|=|\mathcal{F}(1)|> \frac{m}{3r}$.
        \end{claim}
        \begin{proof}[Proof of Claim~\ref{claim3}]
        	Note that if a set contains $1$, then $1$ must be the minimum element in this set. Thus $|\mathcal{F}_1|=|\mathcal{F}(1)|.$
        	
        	By Claim~\ref{claim1} and Proposition~\ref{prop1}(ii), $\sum_{x\in [n]}|\mathcal{F}(x)|^2\geq \frac{m^2}{r}$. Note that for each $x\in [n]$, $|\mathcal{F}(x)|\leq |\mathcal{F}(1)|$. In particular, by the definition of $X_0$, for each $x\notin X_0$, $|\mathcal{F}(x)|< \frac{m}{3kr}$. This implies
        	\begin{align}
        	\frac{m^2}{r}\leq \sum_{x\in X_0}|\mathcal{F}(x)|^2+\sum_{x\notin X_0}|\mathcal{F}(x)|^2< |\mathcal{F}(1)|\sum_{x\in X_0}|\mathcal{F}(x)|+\frac{m}{3kr}\sum_{x\notin X_0}|\mathcal{F}(x)|.\label{eq2}
        	\end{align}
            By Claim~\ref{claim2}, $|X_0|<6kr$ and then $\binom{|X_0|}{2}<18k^2r^2$. Thus, we can obtain that 
            \begin{align}
            \sum_{x\in X_0}|\mathcal{F}(x)|\leq & \left|\bigcup_{x\in X_0}\mathcal{F}(x)\right|+\sum_{\{x,y\}\subset X_0}|\mathcal{F}(x)\cap\mathcal{F}(y)|\notag\\
            \leq & m+\binom{|X_0|}{2}\binom{n-2}{k-2}\notag\\
            < & m+ 18k^2r^2\binom{n-2}{k-2}\notag\\
            \leq & \left(1+\frac{18k^3r^2}{\delta n}\right)m\notag\\
            < & 2m,\label{eq-3}
            \end{align}
            where the penultimate inequality follows from Claim~\ref{claimm} and the last inequality follows from $\delta n>12k(2k+1)(k+r)r^2>18k^3r^2$. 
            Also, by Proposition~\ref{prop1}(i), 
            \begin{align}
        	    \sum_{x\notin X_0}|\mathcal{F}(x)|\leq \sum_{x\in [n]}|\mathcal{F}(x)|=km.\label{eq-4}
            \end{align}
            By \eqref{eq2}, \eqref{eq-3} and \eqref{eq-4}, we have 
            $2m|\mathcal{F}(1)|+\frac{m^2}{3r}>\frac{m^2}{r}$, that is $|\mathcal{F}(1)|>\frac{m}{3r}$.
        \end{proof}
        \begin{claim}\label{claim4}
        	$X_0$ is a cover of $\mathcal{F}$.
        \end{claim}
        \begin{proof}[Proof of Claim~\ref{claim4}]
        	Suppose that there exists a set $F\in \mathcal{F}$ such that $F\cap X_0=\emptyset$. Then $|\mathcal{F}(x)|<\frac{m}{3kr}$ holds for all $x\in F$. 
        	Since $\mathcal{F}$ contains no full star, there exists a set $F_1\in \binom{[n]}{k}$ such that $1\in F_1$ but $F_1\notin \mathcal{F}(1)$. 
        	       	
        	In what follows, we will verify that $\omega((\mathcal{F}\backslash\{F\})\cup \{F_1\})>\omega(\mathcal{F})$, which contradicts the maximality of $\omega(\mathcal{F})$. By Lemma~\ref{lem-simp}, it is sufficient to show that $\omega(F_1,\mathcal{F}\backslash\{F\})>\omega(F,\mathcal{F}\backslash\{F\})$.

        	By Proposition~\ref{prop1}(iv), 
        	$$\omega(F,\mathcal{F})=\sum_{x\in F}|\mathcal{F}(x)|<k\frac{m}{3kr}=\frac{m}{3r}.$$ 
        	Thus
        	\begin{align}
        		\omega(F,\mathcal{F}\backslash\{F\})=\omega(F,\mathcal{F})-|F\cap F|<\frac{m}{3r}-k.\label{eq3}
        	\end{align}
     
        	Again, by Proposition~\ref{prop1}(iv), we have
        	$$
        		\omega(F_1,\mathcal{F}\backslash\{F\})=\sum_{x\in F_1}|(\mathcal{F}\backslash\{F\})(x)|\geq |(\mathcal{F}\backslash\{F\})(1)|\geq |\mathcal{F}(1)|-1.
        	$$
            Note that $|\mathcal{F}(1)|>\frac{m}{3r}$ holds by Claim~\ref{claim3}. We can obtain that
            \begin{align}
            	\omega(F_1,\mathcal{F}\backslash\{F\})>\frac{m}{3r}-1.\label{eq4}
            \end{align}
            By \eqref{eq3} and \eqref{eq4}, it can be calculated that
            \begin{align*}
            	\omega(F_1,\mathcal{F}\backslash\{F\})-\omega(F,\mathcal{F}\backslash\{F\})
            	>2\left(\frac{m}{3r}-1\right)-2\left(\frac{m}{3r}-k\right)
            	\geq 0.
            \end{align*}
            Then by Lemma~\ref{lem-simp}, $\omega((\mathcal{F}\backslash\{F\})\cup \{F_1\})>\omega(\mathcal{F})$. However, according to our assumption, $\mathcal{F}$ is an extremal family, which implies $\omega(\mathcal{F})\geq \omega((\mathcal{F}\backslash\{F\})\cup \{F_1\})$. This leads to a contradiction. Hence $X_0$ is a cover of $\mathcal{F}$, as desired.
        \end{proof}
        Recall that $|\mathcal{F}(1)|\geq |\mathcal{F}(2)|\geq \cdots\geq |\mathcal{F}(n)|$ and $X_0=\{x\in [n]:|\mathcal{F}(x)|\geq \frac{m}{3kr}\}$. So for any $i>j$, $i\in X_0$ implies that $j\in X_0$. Assume that $|X_0|=t$. Then we have $X_0=\{1,2,\ldots,t\}=[t]$ and by Claim~\ref{claim2}, $t<6kr$. Besides, it should be noted that $|\mathcal{F}_1|=|\mathcal{F}(1)|\geq |\mathcal{F}_i|$ for any $i\in [n]$.
        \begin{claim}\label{claim5}
        	For any $i,j\in [t]$, $|\mathcal{F}_i|>|\mathcal{F}_j|-\frac{tk^2}{\delta n}m$.
        \end{claim}
        \begin{proof}[Proof of Claim~\ref{claim5}]
        	For $t=1$, Claim~\ref{claim5} holds immediately. Now we consider the case of $t\geq 2$. First we show that there exists an $F_i\in \mathcal{F}(i)$ such that $F_i\cap [t]=\{i\}$. Otherwise, suppose each set in $\mathcal{F}(i)$ contains at least one element in $[t]$ different from $i$. 
        	
        	By the definition of $\mathcal{F}(i)$, each set in $\mathcal{F}(i)$ must contain $i$. Since this set also contains another element in $[t]$ different from $i$ and such an element can be selected in $(t-1)$ ways, we have $|\mathcal{F}(i)|\leq (t-1)\binom{n-2}{k-2}$.

        	By Claim~\ref{claimm}, $\binom{n-2}{k-2}<\frac{k}{\delta n}m$. Also, using Claim~\ref{claim2}, we have $t<6kr$. Then
        	$$|\mathcal{F}(i)|\leq (t-1)\binom{n-2}{k-2}<\frac{tk}{\delta n}m\leq \frac{6k^2r}{\delta n}m<\frac{m}{3kr},$$
        	where the last inequality holds because $\delta n>12k(2k+1)(k+r)r^2>18k^3r^2$. However, $i\in X_0$ and then $|\mathcal{F}(i)|\geq \frac{m}{3kr}$, which leads to a contradiction. 
        	
        	Let $F_i$ be a set in $\mathcal{F}(i)$ such that $F_i\cap [t]=\{i\}$. Then we have
        	\begin{align}
        		\omega(F_i,\mathcal{F})=\sum_{\substack{j\in [t]\\  
        				j\neq i}}\omega(F_i,\mathcal{F}_j)+\omega(F_i,\mathcal{F}_i)
        			=\sum_{\substack{j\in [t]\\  j\neq i}} \left(\sum_{x\in F_i}|\mathcal{F}_j(x)|\right)+\sum_{x\in F_i}|\mathcal{F}_i(x)|.\label{eq-7}		
        	\end{align}
        	According to the definition of $F_i$, for any $x\in F_i$, $x\neq j$. So for each $F\in \mathcal{F}_j(x)$, $\{x,j\}\subset F$ and then $|\mathcal{F}_j(x)|\leq \binom{n-2}{k-2}$. Therefore, by \eqref{eq-7}, we have
        	\begin{align}
        		\omega(F_i,\mathcal{F})\leq & \sum_{\substack{j\in [t]\\  j\neq i}} \left(\sum_{x\in F_i} \binom{n-2}{k-2}\right)+\sum_{\substack{x\in F_i\\  x\neq i}}|\mathcal{F}_i(x)|+|\mathcal{F}_i(i)|\notag\\
        		\leq & (t-1)k\binom{n-2}{k-2}+ (k-1)\binom{n-2}{k-2}+|\mathcal{F}_i|\notag\\
        		= & |\mathcal{F}_i|+(tk-1)\binom{n-2}{k-2}.\label{eq-8}
        	\end{align}
            Since $\mathcal{F}$ contains no full star, there exists a set $F_j\in \binom{[n]}{k}$ such that $j\in F_j$ but $F_j\notin \mathcal{F}(j)$. It is clear that 
            \begin{align}
            \omega(F_j,\mathcal{F})\geq \omega(F_j,\mathcal{F}_j)=\sum_{x\in F_j}|\mathcal{F}_j(x)|\geq |\mathcal{F}_j(j)|=|\mathcal{F}_j|.\label{eq-9}
            \end{align}
        
            Now by Lemma~\ref{lem-simp}, we obtain that
            \begin{align*}
            	\omega((\mathcal{F}\backslash\{F_i\})\cup\{F_j\})-\omega(\mathcal{F})=&\omega(F_j,\mathcal{F}\backslash\{F_i\})-\omega(F_i,\mathcal{F}\backslash\{F_i\})\\
            	=&\left(\omega(F_j,\mathcal{F})-|F_i\cap F_j|\right)-\left(\omega(F_i,\mathcal{F})-|F_i\cap F_i|\right)\\
            	\geq& |\mathcal{F}_j|-|F_i\cap F_j|-|\mathcal{F}_i|-(tk-1)\binom{n-2}{k-2}+k\\
            	>& |\mathcal{F}_j|-|\mathcal{F}_i|-tk\binom{n-2}{k-2},
            \end{align*}
            where the first inequality holds because of \eqref{eq-8} and \eqref{eq-9}.
            
            Since $\mathcal{F}$ is an extremal family, $\omega(\mathcal{F}\backslash\{F_i\}\cup\{F_j\})\leq \omega(\mathcal{F})$, which implies $|\mathcal{F}_i|>|\mathcal{F}_j|-tk\binom{n-2}{k-2}$.
            Together with Claim~\ref{claimm}, we have $|\mathcal{F}_i|>|\mathcal{F}_j|-\frac{tk^2}{\delta n}m$,
            as desired.
        \end{proof}
    By Claim~\ref{claim4}, $[t]=X_0$ is a cover. Then using Proposition~\ref{prop1}(iii), we have 
    \begin{align}
    	\sum_{i\in [t]}|\mathcal{F}_i|=|\mathcal{F}|=m.\label{eq-12}
    \end{align}
    So there exists an $\mathcal{F}_i$ such that $|\mathcal{F}_i|\leq \frac{m}{t}$. By Claim~\ref{claim5}, 
    \begin{align}
	    |\mathcal{F}_1|<|\mathcal{F}_i|+\frac{tk^2}{\delta n}m<\frac{m}{t}+\frac{tk^2}{\delta n}m.\label{eq5}
    \end{align}
        \begin{claim}\label{claim6}
        	$t< 12r$.
        \end{claim}
        \begin{proof}[Proof of Claim~\ref{claim6}]
        	By Claim~\ref{claim3} and \ref{claim5}, for each $i\in [t]$, 
        	\begin{align*}
        		|\mathcal{F}_i|>&|\mathcal{F}_1|-\frac{tk^2}{\delta n}m
        		=|\mathcal{F}(1)|-\frac{tk^2}{\delta n}m\\
        		>&\frac{m}{3r}-\frac{tk^2}{\delta n}m
        		>\frac{m}{3r}-\frac{6k^3r}{\delta n}m\\
        		>&\frac{m}{3r}-\frac{m}{4r}
        		=\frac{m}{12r},
        	\end{align*}
        	where the penultimate inequality holds because $t<6kr$ and the last inequality holds because $\delta n>12k(2k+1)(k+r)r^2>24k^3r^2$. Then $m=|\mathcal{F}|=\sum_{i\in [t]}|\mathcal{F}_i|>t\frac{m}{12r}$. Thus, $t<12r$.
        \end{proof}
        \begin{claim}\label{claim7}
        	$t\leq r$.
        \end{claim}
        \begin{proof}
        	It can be directly calculated that
        	\begin{align}
        		\omega(\mathcal{F},\mathcal{F})=&\sum_{i\in [t]}\left(\sum_{\substack{j\in [t]\\ j\neq i}}\omega(\mathcal{F}_i,\mathcal{F}_j)\right)+\sum_{i\in [t]}\omega(\mathcal{F}_i,\mathcal{F}_i)\notag\\
        		=&\sum_{i\in [t]}\left(\sum_{\substack{j\in [t]\\ j\neq i}}\sum_{F\in \mathcal{F}_i}\omega(F,\mathcal{F}_j)\right)+\sum_{i\in [t]}\sum_{F\in \mathcal{F}_i}\omega(F,\mathcal{F}_i)\notag\\
        		=&\sum_{i\in [t]}\left(\sum_{\substack{j\in [t]\\ j\neq i}}\sum_{F\in \mathcal{F}_i}\sum_{x\in F}|\mathcal{F}_j(x)|\right)+\sum_{i\in [t]}\sum_{F\in \mathcal{F}_i}\sum_{x\in F}|\mathcal{F}_i(x)|\notag\\
        		=&\sum_{i\in [t]}\sum_{\substack{j\in [t]\\ j\neq i}}\left(\sum_{\substack{F\in \mathcal{F}_i\\  
        				j\notin F}}\sum_{x\in F}|\mathcal{F}_j(x)|+\sum_{\substack{F\in \mathcal{F}_i\\  
        				j\in F}}\left(\sum_{\substack{x\in F\\x\neq j}}|\mathcal{F}_j(x)|+|\mathcal{F}_j(j)|\right)\right)\notag\\
        		 &+\sum_{i\in [t]}\sum_{F\in \mathcal{F}_i}\left(\sum_{\substack{x\in F\\x\neq i}}|\mathcal{F}_i(x)|+|\mathcal{F}_i(i)|\right)\label{eq-10}
        	\end{align}
            where the third equality holds by Proposition~\ref{prop1}(iv). 
            Now we perform scaling on each term in \eqref{eq-10}. 
            \begin{itemize}
            	\item 
            	For 
            	$\sum_{\substack{F\in \mathcal{F}_i\\ j\notin F}}\sum_{x\in F}|\mathcal{F}_j(x)|$, since $x\in F$ and $j\notin F$, we have $x\neq j$. As each set in $\mathcal{F}_j(x)$ contains at least two fixed elements $j$ and $x$, $|\mathcal{F}_j(x)|\leq \binom{n-2}{k-2}$ and then $$\sum_{\substack{F\in \mathcal{F}_i\\ j\notin F}}\sum_{x\in F}|\mathcal{F}_j(x)|\leq \sum_{\substack{F\in \mathcal{F}_i\\ j\notin F}} k \binom{n-2}{k-2}.$$
            	\item 
            	For $\sum_{\substack{F\in \mathcal{F}_i\\  
                j\in F}}\sum_{\substack{x\in F\\x\neq j}}|\mathcal{F}_j(x)|$, since each set in $\mathcal{F}_j(x)$ contains at least two fixed elements $j$ and $x$, $|\mathcal{F}_j(x)|\leq \binom{n-2}{k-2}$. Also, as $x,j\in F$ and $x\neq j$, $x$ can be selected in $(k-1)$ ways. So we have $$\sum_{\substack{F\in \mathcal{F}_i\\  
                j\in F}}\sum_{\substack{x\in F\\x\neq j}}|\mathcal{F}_j(x)|\leq \sum_{\substack{F\in \mathcal{F}_i\\  
                j\in F}}(k-1)\binom{n-2}{k-2}.$$
                \item For $\sum_{\substack{F\in \mathcal{F}_i\\  
                		j\in F}}|\mathcal{F}_j(j)|$, by the definition of $\mathcal{F}_j$, we have 
                $$\sum_{\substack{F\in \mathcal{F}_i\\  
                		j\in F}}|\mathcal{F}_j(j)|=\sum_{\substack{F\in \mathcal{F}_i\\  
                		j\in F}}|\mathcal{F}_j|=|\mathcal{F}_i(j)||\mathcal{F}_j|.$$
                \item
                For $\sum_{F\in \mathcal{F}_i}\sum_{\substack{x\in F\\x\neq i}}|\mathcal{F}_i(x)|$, since $i\in F$, by the definition of $\mathcal{F}_i$,  $x$ can be selected in $(k-1)$ ways. Also, as each set in $\mathcal{F}_j(x)$ contains at least two fixed elements $j$ and $x$, $|\mathcal{F}_j(x)|\leq \binom{n-2}{k-2}$. So we have 
                $$\sum_{F\in \mathcal{F}_i}\sum_{\substack{x\in F\\x\neq i}}|\mathcal{F}_i(x)|\leq \sum_{F\in \mathcal{F}_i} (k-1)\binom{n-2}{k-2}=(k-1)\binom{n-2}{k-2}|\mathcal{F}_i|.$$
                \item For $\sum_{F\in \mathcal{F}_i}|\mathcal{F}_i(i)|$, by the definition of $\mathcal{F}_i$, we have 
                $$\sum_{F\in \mathcal{F}_i}|\mathcal{F}_i(i)|=\sum_{F\in \mathcal{F}_i}|\mathcal{F}_i|=|\mathcal{F}_i|^2.$$
            \end{itemize} 
            Thus, by \eqref{eq-10} and above scalings, we can obtain that
            \begin{align}
            	\omega(\mathcal{F},\mathcal{F})\leq&\sum_{i\in [t]}\sum_{\substack{j\in [t]\\ j\neq i}}\left(\sum_{\substack{F\in \mathcal{F}_i\\  
            			j\notin F}}k\binom{n-2}{k-2}+\sum_{\substack{F\in \mathcal{F}_i\\  
            			j\in F}}(k-1)\binom{n-2}{k-2}+|\mathcal{F}_i(j)||\mathcal{F}_j|\right)\notag\\
            	&+\sum_{i\in [t]}\left((k-1)\binom{n-2}{k-2}|\mathcal{F}_i|+|\mathcal{F}_i|^2\right),\notag\\
            	=&\sum_{i\in [t]}\sum_{\substack{j\in [t]\\ j\neq i}}\left(\sum_{\substack{F\in \mathcal{F}_i\\  
            			j\notin F}}k\binom{n-2}{k-2}+\sum_{\substack{F\in \mathcal{F}_i\\  
            			j\in F}}(k-1)\binom{n-2}{k-2}\right)+\sum_{i\in [t]}\sum_{\substack{j\in [t]\\ j\neq i}}|\mathcal{F}_i(j)||\mathcal{F}_j|\notag\\
            	&+\sum_{i\in [t]}\left((k-1)\binom{n-2}{k-2}|\mathcal{F}_i|+|\mathcal{F}_i|^2\right).\notag\\
            	\leq& \sum_{i\in [t]}(t-1)k\binom{n-2}{k-2}|\mathcal{F}_i|+\sum_{i\in [t]}\sum_{\substack{j\in [t]\\ j\neq i}}|\mathcal{F}_i(j)||\mathcal{F}_j|\notag\\
            	&+\sum_{i\in [t]}\left((k-1)\binom{n-2}{k-2}|\mathcal{F}_i|+|\mathcal{F}_i|^2\right),\label{eq-11}
            \end{align} 
            where the last inequality holds because
            $$
            \sum_{\substack{F\in \mathcal{F}_i\\ j\notin F}}k\binom{n-2}{k-2}+\sum_{\substack{F\in \mathcal{F}_i\\ j\in F}}(k-1)\binom{n-2}{k-2}\leq \sum_{F\in\mathcal{F}_i}k\binom{n-2}{k-2}=k\binom{n-2}{k-2}|\mathcal{F}_i|.
            $$
            Also, by \eqref{eq-12}, $\sum_{i\in [t]}|\mathcal{F}_i|=m$. Then by \eqref{eq-11} we have
            \begin{align}
            	\omega(\mathcal{F},\mathcal{F})\leq & (t-1)k\binom{n-2}{k-2}m+\sum_{j\in [t]}\sum_{\substack{i\in [t]\\ i\neq j}}|\mathcal{F}_j||\mathcal{F}_i(j)|+(k-1)\binom{n-2}{k-2}m+\sum_{i\in [t]}|\mathcal{F}_i|^2\notag\\
            	\leq & (tk-1)\binom{n-2}{k-2}m+\sum_{j\in [t]}(t-1)|\mathcal{F}_j|\binom{n-2}{k-2}+|\mathcal{F}_1|\sum_{i\in [t]}|\mathcal{F}_i|\notag\\
            	= &(tk-1)\binom{n-2}{k-2}m+(t-1)\binom{n-2}{k-2}m+|\mathcal{F}_1|m\notag\\
            	< &t(k+1)\binom{n-2}{k-2}m+|\mathcal{F}_1|m.\label{eq-13}
            \end{align}
            Furthermore, by Claim~\ref{claimm} and \eqref{eq5}, we can obtain the following upper bound of $\omega(\mathcal{F},\mathcal{F})$ from \eqref{eq-13}:
            $$
            \omega(\mathcal{F},\mathcal{F})<\frac{tk(k+1)}{\delta n}m^2+\left(\frac{m}{t}+\frac{tk^2}{\delta n}\right)m=\left(\frac{1}{t}+\frac{tk(2k+1)}{\delta n}\right)m^2.
            $$
            By Claim~\ref{claim6}, $t<12r$ and then 
            $$
            \omega(\mathcal{F},\mathcal{F})<\left(\frac{1}{t}+\frac{12k(2k+1)r}{\delta n}\right)m^2.
            $$
            Now, using Proposition~\ref{prop1}(v) and Claim~\ref{claim1}, we provide a lower bound for $\omega(\mathcal{F},\mathcal{F})$, that is  $\omega(\mathcal{F},\mathcal{F})=2\omega(\mathcal{F})+km\geq \frac{1}{r}m^2$. 
            
            Therefore, $\frac{1}{r}<\frac{1}{t}+\frac{12k(2k+1)r}{\delta n}$. Note that $\delta n> 12k(2k+1)(1+r)r^2$. This implies that $$\frac{12k(2k+1)r}{\delta n}<\frac{1}{r(r+1)}=\frac{1}{r}-\frac{1}{r+1}$$
            and then $\frac{1}{r}<\frac{1}{t}+\frac{1}{r}-\frac{1}{r+1}$. So $t<r+1$, as desired.
        \end{proof}
        Since $\binom{n}{k}-\binom{n-r+1}{k}<m=|\mathcal{F}|$, any cover of $\mathcal{F}$ has size at least $r$. By Claim~\ref{claim7}, $t=r$ and then $\mathcal{F}$ has a cover of $r$ elements. Now we have finished the proof of Lemma~\ref{lem1}.
    \end{proof}
    
    The following lemma shows if $\mathcal{F}_0$ is an extremal family of size $m$, then the complement of $\mathcal{F}_0$ is also an extremal family of size $\binom{n}{k}-m$. Hence, Theorem~\ref{thm-main} and Lemma~\ref{lem2} derive Theorem~\ref{thm-cor}.
    
    \begin{lem}\label{lem2}
    	Let $\mathcal{F}_0\subset \binom{[n]}{k}$ of size $m$. Assume that $\mathcal{F}_0$ is the unique extremal family (up to isomorphic) maximizing $\omega(\mathcal{F})$ among all families $\mathcal{F}\subset \binom{[n]}{k}$ of size $m$. Then $\binom{[n]}{k}\backslash\mathcal{F}_0$ is the unique extremal family (up to isomorphic) maximizing $\omega(\mathcal{G})$ among all families $\mathcal{G}\subset \binom{[n]}{k}$ of size $\binom{n}{k}-m$.
    \end{lem}
    \begin{proof}
    	For any $\mathcal{G}\subset \binom{[n]}{k}$ of size $\binom{n}{k}-m$, let $\mathcal{F}:=\binom{[n]}{k}\backslash\mathcal{G}$. Note that $|\mathcal{F}|=m$. We have $\omega(\mathcal{F})\leq \omega(\mathcal{F}_0)$ and then $\sum_{x\in [n]}|\mathcal{F}(x)|^2\leq \sum_{x\in [n]}|\mathcal{F}_0(x)|^2$, with equality if and only if $\mathcal{F}=\mathcal{F}_0$ up to isomorphic. Since each element $x\in [n]$ is in exact $\binom{n-1}{k-1}$ sets in $\binom{[n]}{k}$, by Proposition~\ref{prop1}(ii),
    	\begin{align*}
    		2\omega(\mathcal{G})=&\sum_{x\in [n]}|\mathcal{G}(x)|^2-k\left(\binom{n}{k}-m\right)\\
    		=&\sum_{x\in [n]}\left(\binom{n-1}{k-1}-|\mathcal{F}(x)|\right)^2-k\left(\binom{n}{k}-m\right)\\
    		=&n\binom{n-1}{k-1}^2-2km\binom{n-1}{k-1}+\sum_{x\in [n]}|\mathcal{F}(x)|^2-k\left(\binom{n}{k}-m\right)\\
    		\leq & n\binom{n-1}{k-1}^2-2km\binom{n-1}{k-1}+\sum_{x\in [n]}|\mathcal{F}_0(x)|^2-k\left(\binom{n}{k}-m\right)\\
    		=& \sum_{x\in [n]}\left(\binom{n-1}{k-1}-|\mathcal{F}_0(x)|\right)^2-k\left(\binom{n}{k}-m\right)\\
    		=& 2\omega\left (\binom{[n]}{k}\backslash\mathcal{F}_0\right).
    	\end{align*}
        Thus, $\omega(\mathcal{G})\leq \omega(\binom{[n]}{k}\backslash\mathcal{F}_0)$, with equality if and only if $\mathcal{G}=\binom{[n]}{k}\backslash\mathcal{F}_0$ up to isomorphic.
    \end{proof}

\section{Proof of Theorem~\ref{thm-main}}
    In our proof, we will use induction on $n, k$ and $m$. Before considering the base case of the induction, we first show how the induction works.
      
    Our goal is to prove Theorem~\ref{thm-main} holds for fixed $(n, k, m)$, that is
     
    {\bf Theorem~\ref{thm-main}A.}
    {\it  Let $n,k$ be fixed positive integers. Assume that $m$ is a fixed positive integer with $(n,k)$-cascade coefficients $(r_1,\ldots,r_s)$ and  
    \begin{align}
    	n>36k(2k+1)(k+r_s)r_s^2. \label{eq-n}
    \end{align}	
    Let $\mathcal{F}\subset \binom{[n]}{k}$ be an extremal family of size $m$. If for each $2\leq t\leq s$,
    \begin{align}
    	\frac{12k(2k+1)(k+1)}{n-r_s}<\delta_{n,k,m,t}=\frac{\sum_{i=t}^s\binom{n-r_i}{k-i+1}}{\binom{n-t+1}{k-t+1}}<1-\frac{12k(2k+1)(k+1)}{n-r_s}, \label{eq-d}
    \end{align}
    then $\mathcal{F}=\mathcal{L}_{n,k}^{m}$ up to isomorphic.
    }
    
    Assume that $\mathcal{F}$ is an extremal family of size $m$. Let $(r_1,\ldots,r_s)$ be the uniquely determined $(n,k)$-cascade coefficients given by \eqref{eqc}:
    $$
    \binom{n}{k}-m=\binom{n-r_1}{k}+\binom{n-r_2}{k-1}+\cdots+\binom{n-r_{s}}{k-s+1},1\leq r_1<\cdots<r_{s}\leq n-1.
    $$
    By the uniqueness of $k$-cascade of $\binom{n}{k}-m$, it is clear that that
    \begin{align}
    	\binom{n}{k}-\binom{n-r_1+1}{k}<m\leq \binom{n}{k}-\binom{n-r_1}{k}\label{eq-m}
    \end{align}
    and $n>36k(2k+1)(k+r_s)r_s^2\geq 36k(2k+1)(k+r_1)r_1^2$. We divide our discussion into two cases.
    
    {\bf Case 1.} $\mathcal{F}$ contains a full star. 
    Without loss of generality, assume that $\mathcal{F}(1)$ is a full star. 
    Then $\mathcal{F}(1)=\mathcal{F}_1=\{F\in \binom{[n]}{k}: 1\in F\}\subseteq \mathcal{F}$, which implies $m=|\mathcal{F}|\geq \binom{n-1}{k-1}$. First we consider the case $r_1=1$. In this case, by \eqref{eq-m}, $m\leq \binom{n-1}{k-1}$ and then $m= \binom{n-1}{k-1}$. So $\mathcal{F}=\mathcal{F}(1)=\mathcal{L}_{n,k}^m$, as desired. Now assume that $r_1\geq 2$. Note that 
    \begin{align}
    	\omega(\mathcal{F})=&\omega(\mathcal{F}_1)+\omega(\mathcal{F}\backslash\mathcal{F}_1)+\omega(\mathcal{F}_1,\mathcal{F}\backslash\mathcal{F}_1)\notag\\
    	=&\omega(\mathcal{F}_1)+\omega(\mathcal{F}\backslash\mathcal{F}_1)+\sum_{F\in \mathcal{F}\backslash\mathcal{F}_1}\sum_{x\in F}|\mathcal{F}_1(x)|.\label{eq:1}
    \end{align}

    Since $\mathcal{F}_1$ is a family independent of the structure of $\mathcal{F}$, $\omega(\mathcal{F}_1)$ depends only on $n$ and $k$. Also, for any $x\in F$, because $F\notin \mathcal{F}_1$, we have $x\neq 1$. Then $|\mathcal{F}_1(x)|=\binom{n-2}{k-2}$. Thus, $\sum_{F\in \mathcal{F}\backslash\mathcal{F}_1}\sum_{x\in F}|\mathcal{F}_1(x)|=|\mathcal{F}\backslash\mathcal{F}_1|k\binom{n-2}{k-2}=\left(m-\binom{n-1}{k-1}\right)k\binom{n-2}{k-2}$. Thus, if we want to maximize $\omega(\mathcal{F})$, by \eqref{eq:1}, it is sufficient to maximize $\omega(\mathcal{F}\backslash\mathcal{F}_1)$. 
    
    It should be noted that $\mathcal{F}\backslash\mathcal{F}_1$ is a family in $\binom{[n]\backslash\{1\}}{k}$ of size $m_0:=m-\binom{n-1}{k-1}$. Suppose that we have already proven Theorem~\ref{thm-main}A for $(n-1,k,m_0)$. Then $(\mathcal{F}\backslash\mathcal{F}_1)=\mathcal{L}_{n-1,k}^{m_0}$ maximizes $\omega(\mathcal{F}\backslash\mathcal{F}_1)$. Since $\mathcal{L}_{n-1,k}^{m_0}$ is isomorphic to the family consisting of the first $m_0$ sets in the lexicographical ordering in $\binom{[n]\backslash\{1\}}{k}$, we have $\mathcal{F}=\mathcal{L}_{n,k}^m$ up to isomorphic, as desired. 
    
    Hence, it is sufficient to prove Theorem~\ref{thm-main}A for $(n-1,k,m_0)$. We also need to verify that $(n-1,k,m_0)$ satisfies the conditions \eqref{eq-n} and \eqref{eq-d} of Theorem~\ref{thm-main}A. By \eqref{eqc}, it can be calculated that 
    \begin{align*}
    	\binom{n-1}{k}-m_0=&\binom{n-1}{k}-m+\binom{n-1}{k-1}\\
    	=&\binom{n}{k}-m\\
    	=&\binom{n-r_1}{k}+\binom{n-r_2}{k-1}+\cdots+\binom{n-r_{s}}{k-s+1}\\
    	=&\binom{n-1-(r_1-1)}{k}+\binom{n-1-(r_2-1)}{k-1}+\cdots+\binom{n-1-(r_{s}-1)}{k-s+1}.
    \end{align*}
    Then $(r_1-1,r_2-1,\ldots,r_s-1)$ are the $(n-1,k)$-cascade coefficients of $m_0$.
    This implies that $\delta_{n-1,k,m_0,t}=\delta_{n,k,m,t}$.
    
    By \eqref{eq-n}, we have $n-1>12k(2k+1)(k+r_s-1)(r_s-1)^2$.
    Also, using \eqref{eq-d}, for each $2\leq t\leq s$, we have
    $$\frac{12k(2k+1)(k+1)}{n-1-(r_s-1)}<\delta_{n-1,k,m_0,t}<1-\frac{12k(2k+1)(k+1)}{n-1-(r_s-1)}.$$
    So $(n-1,k,m_0)$ satisfies the conditions \eqref{eq-n} and \eqref{eq-d} of Theorem~\ref{thm-main}A.
    
    {\bf Case 2.} $\mathcal{F}$ contains no full star.
    
    In this case, since $n>36k(2k+1)(k+r_s)r_s^2$, by Lemma~\ref{lem1}, $\mathcal{F}$ has a cover of $r_1$ elements provided $r_1\geq 2$. Now we consider the case $r_1=1$.
    
    By \eqref{eqc} and \eqref{eqd} , it can be calculated that 
    \begin{align*}
    	\delta_{n,k,m,2}=\frac{\sum_{i=2}^s\binom{n-r_i}{k-i+1}}{\binom{n-1}{k-1}}
    	=\frac{\binom{n}{k}-\binom{n-r_1}{k}-m}{\binom{n-1}{k-1}}=\frac{\binom{n}{k}-\binom{n-1}{k}-m}{\binom{n-1}{k-1}}=1-\frac{m}{\binom{n-1}{k-1}}.
    \end{align*}
    Recall that $\delta_{n,k,m,2}$ satisfies \eqref{eq-d}. Then 
    $$\frac{m}{\binom{n-1}{k-1}}\cdot n=(1-\delta)n>\frac{12k(2k+1)(k+1)n}{n-r_s}>12k(2k+1)(k+1).$$
    Again, by Lemma~\ref{lem1}, $\mathcal{F}$ has a cover of $1$ elements. Thus,  $\mathcal{F}$ has a cover of $r_1$ elements regardless of whether $r_1=1$ or $r_1\geq 2$.
    
    Without loss of generality, assume that $[r_1]$ is a cover of $\mathcal{F}$. By \eqref{eq-m}, $m\leq \binom{n}{k}-\binom{n-r_1}{k}$. If $m=\binom{n}{k}-\binom{n-r_1}{k}$, then $r_1=r_s$ and $\mathcal{F}=\{F\in\binom{n}{k}:F\cap [r_1]\neq \emptyset\}$, which contains a full star. Thus in what follows, we assume that $m<\binom{n}{k}-\binom{n-r_1}{k}$. 
    
    Let $\mathcal{A}:=\{A\in \binom{[n]}{k}:A\cap [r_1]\neq \emptyset\}$ and $\mathcal{G}:=\mathcal{A}\backslash\mathcal{F}$. Then by Proposition~\ref{prop1}(v), we have $\omega(\mathcal{G},\mathcal{G})=2\omega(\mathcal{G})-k|\mathcal{G}|$. So
    \begin{align*}
    	\omega(\mathcal{A})=&\omega(\mathcal{F})+\omega(\mathcal{G})+\omega(\mathcal{G},\mathcal{F})\\
    	=&\omega(\mathcal{F})+\omega(\mathcal{G})+\omega(\mathcal{G},\mathcal{A})-\omega(\mathcal{G},\mathcal{G})\\
    	=&\omega(\mathcal{F})-\omega(\mathcal{G})+\omega(\mathcal{G},\mathcal{A})-k|\mathcal{G}|,
    \end{align*}
    that is 
    \begin{align}
    \omega(\mathcal{F})=\omega(\mathcal{A})+\omega(\mathcal{G})-\omega(\mathcal{G},\mathcal{A})+k|\mathcal{G}|.\label{eq:2}
    \end{align}
    Since $\mathcal{A}$ is a family independent of the structure of $\mathcal{F}$, $\omega(\mathcal{A})$ depends only on $n,k$ and $r_1$. Also, we have $|\mathcal{G}|=\binom{n}{k}-\binom{n-r_1}{k}-m$. Thus, by \eqref{eq:2}, if we want to maximize $\omega(\mathcal{F})$, it is sufficient to maximize $\omega(\mathcal{G})-\omega(\mathcal{G},\mathcal{A})$. In what follows, we will determine the minimum value of $\omega(\mathcal{G},\mathcal{A})$ and the maximum value of $\omega(\mathcal{G})$ separately.
    
    By a direct calculation,
    $$
    \omega(\mathcal{G},\mathcal{A})=\sum_{G\in\mathcal{G}}\omega(G,\mathcal{A})=\sum_{G\in\mathcal{G}}\left(\sum_{x\in G\cap [r_1]}|\mathcal{A}(x)|+\sum_{x\in G\backslash [r_1]}|\mathcal{A}(x)|\right).
    $$
    Note that $|\mathcal{A}(x)|=|\mathcal{A}(1)|=\binom{n-1}{k-1}$ if $x\in [r_1]$ and $|\mathcal{A}(x)|=|\mathcal{A}(r_1+1)|=\binom{n-1}{k-1}-\binom{n-r_1-1}{k-1}$ if $x\notin [r_1]$. Combining with the fact $|\mathcal{A}(1)|>|\mathcal{A}(r_1+1)|$ and $|G\cap [r_1]|+|G\backslash[r_1]|=|G|=k$, we have 
    \begin{align*}
    	\omega(\mathcal{G},\mathcal{A})=& \sum_{G\in\mathcal{G}}\left(|G\cap[r_1]||\mathcal{A}(1)|+|G\backslash[r_1]||\mathcal{A}(r_1+1)|\right)\\
    	\geq& |\mathcal{G}|\left(|\mathcal{A}(1)|+(k-1)|\mathcal{A}(r_1+1)|\right)\\
    	=& \left(\binom{n}{k}-\binom{n-r_1}{k}-m\right)\left(|\mathcal{A}(1)|+(k-1)|\mathcal{A}(r_1+1)|\right)\\
    	=&\left(\binom{n}{k}-\binom{n-r_1}{k}-m\right)\left(k\binom{n-1}{k-1}-(k-1)\binom{n-r_1-1}{k-1}\right),
    \end{align*}
    where the inequality holds because $|G\cap[r_1]|\geq 1$. This implies that $\omega(\mathcal{G},\mathcal{A})$ reaches its minimum value only if $|G\cap[r_1]|= 1$ for any $G\in \mathcal{G}$. 
    
    Now, we consider the maximum value of $\omega(\mathcal{G})$. Actually, $\mathcal{G}$ is a family in $\binom{[n]}{k}$ of size $m':=\binom{n}{k}-\binom{n-r_1}{k}-m$. Let $m'':=\binom{n-1}{k-1}-m'$ and $\mathcal{L}^c:=\{F\in \binom{[n]}{k}:1\in F\text{ and }F\notin \mathcal{L}_{n,k}^{m''}\}$. Since $\binom{n}{k}-\binom{n-r_1+1}{k}<m<\binom{n}{k}-\binom{n-r_1}{k}$, we have $0<m'<\binom{n-r_1+1}{k}-\binom{n-r_1}{k}=\binom{n-r_1}{k-1}\leq \binom{n-1}{k-1}$ and then $0<m''<\binom{n-1}{k-1}$. 
    
    Suppose we have already proven Theorem~\ref{thm-main}A for $(n-1,k-1,m'')$.  In what follows, we claim that $\omega(\mathcal{G})$ reaches its maximum value only if $\mathcal{G}=\mathcal{L}^c$ up to isomorphic.
    
    Since $m''<\binom{n-1}{k-1}$, by the definition of the lexicographic ordering, $\{1\}$ is a cover of $\mathcal{L}_{n,k}^{m''}$. Thus $\mathcal{L}^c\cup \mathcal{L}_{n,k}^{m''}=\{F\in \binom{[n]}{k}:1\in F\}$ and $|\mathcal{L}^c|=m'$. In other words, $\mathcal{L}^c$ is the family consisting of the largest $m'$ sets in the lexicographic ordering among the sets in $\binom{[n]}{k}$ that containing 1.
    
    Let $(\mathcal{L}^c-1)$ and $(\mathcal{L}_{n,k}^{m''}-1)$ be the families of sets obtained by removing 1 from each set in $\mathcal{L}^c$ and $\mathcal{L}_{n,k}^{m''}$ respectively. Then $(\mathcal{L}^c-1)$ is the complement of $(\mathcal{L}_{n,k}^{m''}-1)$ in $\binom{[n]\backslash\{1\}}{k-1}$. By the induction hypothesis, we have already proven Theorem~\ref{thm-main}A for $(n-1,k-1,m'')$. Thus among all families $\mathcal{F}'\subset \binom{[n]\backslash\{1\}}{k-1}$ of size $m''$, $\mathcal{L}_{n,k}^{m''}-1$ is the unique extremal family maximizing $\omega(\mathcal{F}')$. Then by Lemma~\ref{lem2}, among all families $\mathcal{G}'\subset \binom{[n]\backslash\{1\}}{k-1}$ of size $m'$, $\mathcal{L}^c-1$ is the unique extremal family maximizing $\omega(\mathcal{G}')$.

    Also, let $\mathcal{G}_0$ is an extremal family in $\binom{[n]}{k}$ of size $m'$. Then $\omega(\mathcal{G}_0)\geq\omega(\mathcal{G})$. Recall that $\delta_{n,k,m,2}$ satisfies \eqref{eq-d}. So using \eqref{eqc} and \eqref{eqd}, we have
    $$ \frac{12k(2k+1)(k+1)}{n-r_s}<\delta_{n,k,m,2}=\frac{\sum_{i=2}^s\binom{n-r_i}{k-i+1}}{\binom{n-1}{k-1}}
    =\frac{\binom{n}{k}-\binom{n-r_1}{k}-m}{\binom{n-1}{k-1}}=\frac{m'}{\binom{n-1}{k-1}}.$$
    This implies that $\frac{m'}{\binom{n-1}{k-1}}\cdot n>12k(2k+1)(k+1)$. Since $m'<\binom{n-1}{k-1}$, by Lemma~\ref{lem1}, $\mathcal{G}_0$ has a cover of 1 element. Without loss of generality, assume that $\{1\}$ is a cover of $\mathcal{G}_0$.
    Let $(\mathcal{G}_0-1)$ be the family by removing 1 from each set in $\mathcal{G}_0$. Then $(\mathcal{G}_0-1)$ is a family in $\binom{[n]\backslash\{1\}}{k-1}$ of size $m'$. Using Proposition~\ref{prop1}(ii), we have 
    $$
    \omega(\mathcal{G}_0)=\sum_{x\in [n]\backslash\{1\}}\binom{|\mathcal{G}_0(x)|}{2}+\binom{|\mathcal{G}_0(1)|}{2}=\omega(\mathcal{G}_0-1)+\binom{m'}{2}\leq \omega(\mathcal{L}'-1)+\binom{m'}{2}=\omega(\mathcal{L}^c).
    $$
    Thus, we have $\omega(\mathcal{G})\leq \omega(\mathcal{G}_0)\leq \omega(\mathcal{L}^c)$, with equality if and only if $\mathcal{G}=\mathcal{L}^c$ up to isomorphic, as desired.
    
    Moreover, since $m''=\binom{n-1}{k-1}-m'>\binom{n-1}{k-1}-\binom{n-r_1}{k-1}$, the minimum element in the $m''$-th set in $(\mathcal{L}^{m''}_{n,k}-1)$ must be larger than $r_1$. Then each set in $(\mathcal{L}^c-1)$ has no element in $[r_1]$. Let $(\mathcal{L}^c-1+r_1)$ be the family obtained by replacing $1$ with $r_1$ from each set in $\mathcal{L}^c$. Then $(\mathcal{L}^c-1+r_1)$ is isomorphic to $\mathcal{L}^c$, which means $\omega(\mathcal{G})$ reaches its maximum value provided $\mathcal{G}=(\mathcal{L}^c-1+r_1)$. Beside, $\omega(\mathcal{G},\mathcal{A})$ reaches its minimum value provided $\mathcal{G}=(\mathcal{L}^c-1+r_1)$ because $|G\cap [r_1]|=|\{r_1\}|=1$ for each $G\in \mathcal{G}$. Hence, $\omega(\mathcal{G})-\omega(\mathcal{G},\mathcal{A})$ reaches its maximum value only if  $\mathcal{G}=(\mathcal{L}^c-1+r_1)$ up to isomorphic.
    
    Note that $\mathcal{G}=(\mathcal{L}^c-1+r_1)$ is the family consisting of the largest $m'$ set in the lexicographic ordering among the sets in $\binom{[n]}{k}$, which have the minimum element $r_1$. So $\omega(\mathcal{F})$ reaches its maximum value only if $\mathcal{F}=\mathcal{A}\backslash(\mathcal{L}^c-1+r_1)=\mathcal{L}_{n,k}^{m}$, as desired.
    
    Hence, it is sufficient to prove Theorem~\ref{thm-main}A for $(n-1,k-1,m'')$. We also need to verify that $(n-1,k-1,m'')$ satisfies the conditions \eqref{eq-n} and \eqref{eq-d} of Theorem~\ref{thm-main}A. By \eqref{eqc}, it can be calculated that 
    \begin{align*}
    	\binom{n-1}{k-1}-m''=&\binom{n-1}{k-1}-\binom{n-1}{k-1}+m'\\
    	=&\binom{n}{k}-\binom{n-r_1}{k}-m\\
    	=&\binom{n-r_2}{k-1}+\binom{n-r_3}{k-2}+\cdots+\binom{n-r_{s}}{k-s+1}\\
    	=&\binom{n-1-(r_2-1)}{k-1}+\binom{n-1-(r_3-1)}{k-1}+\cdots+\binom{n-1-(r_{s}-1)}{k-s+1}.
    \end{align*}
    Since $r_2\geq r_1+1\geq 2$, $(r_2-1,r_3-1,\ldots,r_s-1)$ are the $(n-1,k-1)$-cascade coefficients of $m''$. By \eqref{eq-n}, we have $n-1>36(k-1)(2k-1)(k+r_s-2)(r_s-1)^2$. By \eqref{eqd},
    $$\delta_{n-1,k-1,m',t}=\frac{\sum_{i=t}^{s-1}\binom{n-r_{i+1}}{k-i}}{\binom{n-t}{k-t}}=\frac{\sum_{i=t+1}^s\binom{n-r_{i}}{k-i+1}}{\binom{n-t}{k-t}}=\delta_{n,k,m,t+1}.$$
    Thus, using \eqref{eq-d} and the fact $12k(2k+1)k>12(k-1)(2k-1)(k-1)$, for each $2\leq t\leq s-1$, we have
    $$\frac{12(k-1)(2k-1)k}{n-1-(r_s-1)}<\delta_{n-1,k-1,m',t}<1-\frac{12(k-1)(2k-1)k}{n-1-(r_s-1)}.$$
    
    {\bf Base case of the induction.} Based on the argument above, in each step of induction, we turn $(n,k,m)$ into either $(n-1,k,m_0)$ or $(n-1,k-1,m'')$. Because $n$ is much larger than $k$ and $m$ is not increasing at each step, in our base case, we only need to prove one of the following cases:
    \begin{wst}
    	\item Theorem~\ref{thm-main}A for $(n,1,m)$;
    	\item Theorem~\ref{thm-main}A for $(n,k,1)$;
    	\item Theorem~\ref{thm-main}A for $(n,k,m)$, $\mathcal{F}$ contains a full star and $m\leq \binom{n-1}{k-1}$.
    \end{wst} 
    Clearly, all of them are trivial cases. Hence, we have finished the proof of Theorem~\ref{thm-main}. \qed
\section{Remark}
    In this paper, we determine that  $\omega(\mathcal{F})\leq \omega(\mathcal{L}_{n,k}^{m})$ for some special $m$ provided $n$ is sufficiently large. This result give a natural extension of the enumeration problem of the disjoint pairs in a family of sets, and also give an extremal hypergraph graph maximizing the sum of squares of degrees.
    
    We also want to mention a combinatorial explanation of \eqref{eqc}. Let $\{r_1,\ldots,r_k\}$ be the $m$-th set in the lexicographic ordering on $\binom{[n]}{k}$ with $1\leq r_1<r_2<\cdots<r_k\leq n$. Then we claim that $m$ and $\{r_1,\ldots,r_s\}$ satisfies \eqref{eqc}, where $r_s$ is the last element with $r_s\leq n-k+s-1$. This implies that $(r_{s+1},r_{s+2},\ldots,r_k)=(n-k+s+1,n-k+s+2,\ldots,n)$.
    
    To prove this, we divide $\mathcal{L}_{n,k}^m$ into two families $\mathcal{A}_{1}\cup \mathcal{L}_1$, where $\mathcal{A}_1$ consists of sets with some $r<r_1$ as the minimum element, and $\mathcal{L}_1$ consists of sets with $r_1$ as the minimum element. It is clear that $|\mathcal{A}_1|=\binom{n}{k}-\binom{n-r_1+1}{k}$. 
    
    Further, we remove $r_1$ from each set in $\mathcal{L}_1$ and divide the obtained family into two subfamilies $\mathcal{A}_{2}\cup \mathcal{L}_2$, where $\mathcal{A}_2$ consists of sets with some $r_1<r<r_2$ as the minimum element, and $\mathcal{L}_2$ consists of sets with $r_2$ as the minimum element. Again, it is clear that $|\mathcal{A}_2|=\binom{n-r_1}{k-1}-\binom{n-r_2+1}{k-1}$. 
    
    Then we remove $r_2$ from each set in $\mathcal{L}_2$ and divide the obtained family into two subfamilies. Continue this partitioning process until the reminding subfamily $\mathcal{L}_s$ contains all sets in $\left\{L\in\binom{[n]\backslash[r_s-1]}{k-s+1}:r_s \text{ is the minimum element in } L\right\}$ and then $|\mathcal{L}_s|=\binom{n-r_s}{k-s}$. Then we have
    \begin{align*}
    	m=&|\mathcal{A}_1|+|\mathcal{A}_2|+\cdots+|\mathcal{A}_{s-1}|+|\mathcal{A}_s|+|\mathcal{L}_s|\\
    	=&\binom{n}{k}-\binom{n-r_1+1}{k}+\binom{n-r_1}{k-1}-\binom{n-r_2+1}{k-1}+\cdots-\binom{n-r_s+1}{k-s+1}+\binom{n-r_s}{k-s}\\
    	=&\binom{n}{k}-\binom{n-r_1}{k}-\binom{n-r_2}{k-1}-\cdots-\binom{n-r_s}{k-s+1}.
    \end{align*}
    Thus, $m$ and $\{r_1,\ldots,r_s\}$ satisfies \eqref{eqc}. In other words, if $(r_1,\ldots,r_s)$ is determined by \eqref{eqc}, then the last set in $\mathcal{L}_{n,k}^m$ is actually $\{r_1,\ldots,r_s,n-k+s+1,n-k+s+2,\ldots,n\}$.
    
    By the definition~\eqref{eqd} of $\delta_{n,k,m,t}$, we have
    \begin{align*}
    	\delta_{n,k,m,t}\cdot\binom{n-t+1}{k-t+1}=&\sum_{i=t}^s\binom{n-r_i}{k-i+1}\\
    	=&\sum_{i=t}^s\left(\binom{n-r_i+1}{k-i+1}-\binom{n-r_i}{k-i}\right)\\
    	=&\binom{n-r_t+1}{k-t+1}-\sum_{i=t}^{s-1}\left(\binom{n-r_i}{k-i}-\binom{n-r_{i+1}+1}{k-i}\right)-\binom{n-r_s}{k-s}\\
    	=&\binom{n-r_t+1}{k-t+1}
    	-\sum_{i=t}^{s}|\mathcal{A}_i|-|\mathcal{L}_s|\\
    	=&\binom{n-r_t+1}{k-t+1}+\sum_{i=1}^{t-1}|\mathcal{A}_i|-m.
    \end{align*}
    Note that $|\mathcal{L}_1|=m-|\mathcal{A}_1|$, $|\mathcal{L}_i|=|\mathcal{L}_{i-1}|-|\mathcal{A}_i|$ for $2\leq i\leq t-1$. So we have $|\mathcal{L}_{t-1}|=m-\sum_{i=1}^{t-1}|\mathcal{A}_i|$. Thus, 
    $$\delta_{n,k,m,t}=\frac{\binom{n-r_t+1}{k-t+1}-|\mathcal{L}_{t-1}|}{\binom{n-t+1}{k-t+1}}.$$
      
    Now recall the condition of Theorem~\ref{thm-main}:
    $$\frac{12k(2k+1)(k+1)}{n-r}<\delta_{n,k,m,t}<1-\frac{12k(2k+1)(k+1)}{n-r}.$$ The range of $\delta_{n,k,m,t}$ implies that $|\mathcal{L}_{t-1}|$ can not be too small or too large. In fact, this condition may be extended but $\delta_{n,k,m,t}$ can not too close to $0$ or $1$.
    
    For example, let $k$ be a fixed integer. Assume that $n$ is sufficiently large and $m$ is the positive integer with $(n,k)$-cascade coefficients $(1,2,\ldots,k-2,k-1,2k)$, that is
    \begin{align*}
    	m=&\binom{n}{k}-\binom{n-1}{k}-\binom{n-2}{k-1}-\cdots-\binom{n-(k-1)}{k-(k-1)+1}-\binom{n-2k}{k-k+1}\\
    	=&\binom{n-k+1}{1}-\binom{n-2k}{1}\\
    	=&k+1.
    \end{align*}
    By \eqref{eqd}, we have
    $$\delta_{n,k,m,2}=\frac{\sum_{i=2}^{k}\binom{n-i}{k-i+1}}{\binom{n-2+1}{k-2+1}}=\frac{\binom{n}{k}-\binom{n-1}{k}-m}{\binom{n-1}{k-1}}=1-\frac{k+1}{\binom{n-1}{k-1}}>1-\frac{12k(2k+1)k}{n-2k}.$$
    It is clear that $\mathcal{F}=\binom{[k+1]}{k}$ is an extremal family of size $m$ maximizing $\omega(\mathcal{F})$. However, $\mathcal{F}$ is not isomorphic to $\mathcal{L}^m_{n,k}$.
    
\section{Declaration of competing interest}

The authors declare that they have no known competing financial interests or personal relationships that could have appeared to influence the work reported in this paper.

\section{Acknowledgements}
Erfei Yue is supported by ERC Advanced grant GeoScape, No. 882971.


\begin{thebibliography}{99}
	\bibitem{Ahlswede2} R. Ahlswede, G. O. H. Katona: Graphs with maximal number of adjacent pairs of edges, {\it Acta Mathematica Academiae Scientiarum Hungarica}, 32(1-2) (1978) 97-120.
	\bibitem{Ahlswede} R. Ahlswede: Simple hypergraphs with maximal number of adjacent
	pairs of edges, {\it Journal of Combinatorial Theory, Series B}, 28(2) (1980) 164-167.
	\bibitem{Bey} C. Bey: An upper bound on the sum of squares of degrees in a hypergraph, {\it Discrete Mathematics}, 269 (2003) 259-263.
	\bibitem{Bollobas} B. Bollob\'{a}s, I. Leader: Set systems with few disjoint pairs, {\it Combinatorica}, 23(2003) 559-570.
	\bibitem{Caen} D. de Caen: An upper bound on the sum of squares of degrees in a graph, {\it Discrete Mathematics}, 185 (1998) 245-248.
	\bibitem{chDas} K. C. Das: Maximizing the sum of the squares of the degrees of a graph, {\it Discrete Mathematics}, 285 (2004) 57-66.
	\bibitem{Das} S. Das, W. Gan, B. Sudakov: The minimum number of disjoint pairs in set systems and related problems, {\it Combinatorica}, 36(6) (2016) 623-660.
	\bibitem{Erdos} P. Erd\H{o}s, C. Ko, R. Rado: Intersection theorems for systems of finite sets, {\it Quarterly Journal of Mathematics}, 12 (1) (1961) 313-320.
	\bibitem{Frankl} P. Frankl: On the minimum number of disjoint pairs in a family of finite sets, {\it Journal of Combinatorial Theory, Series A}, 22 (1977) 249-251.
	\bibitem{Kong} X. Kong, G. Ge: On an inverse problem of the Erd\H{o}s-Ko-Rado type theorems, arXiv:2004.01529.
\end{thebibliography}
\end{document}